\documentclass[preprint,12pt]{elsarticle}
\usepackage{hyperref}
\usepackage{amsmath,amsthm}
\usepackage{amssymb}

\newcommand{\eps}{\varepsilon}
\newcommand{\Rd}{\mathbb R^d}

\newcommand{\Rp}{\mathbb R^+}
\newcommand{\E}{\mathbb E}
\newcommand{\pr}{\mathbb P}
\newcommand{\N}{\mathbb N}
\newcommand{\1}{\mathbf 1}
\newcommand{\del}{\partial}

\def \levy {L\'{e}vy}

\newtheorem{theorem}[equation]{Theorem}
\newtheorem{lemma}[equation]{Lemma}
\newtheorem{proposition}[equation]{Proposition}

\newdefinition{definition}[equation]{Definition}

\newdefinition{remark}[equation]{Remark}
\newproof{pot}{Proof of Theorem}

\numberwithin{equation}{section}

\setlength{\topmargin} {-30 pt}
\setlength{\headheight} {0 pt}
\setlength{\oddsidemargin} {0 pt}
\setlength{\evensidemargin} {0 pt}
\setlength{\marginparwidth} {0 pt}
\setlength{\marginparsep} {0 pt}
\setlength{\textwidth} {16.7 cm}
\setlength{\textheight} {23 cm}

\title{Lagging/Leading Coupled Continuous Time Random Walks, Renewal Times and their Joint Limits}

\begin{document}

 \author[UNSW]{B.I. Henry}
 \ead{B.Henry@unsw.edu.au}
 \author[UNSW]{P. Straka\corref{cor}}
 \ead{p.straka@unsw.edu.au}

\cortext[cor]{Corresponding author}

 \address[UNSW]{Department of Applied Mathematics, School of Mathematics and Statistics,
 University of New South Wales, Sydney NSW 2052, Australia.}

\begin{abstract}
Subordinating a random walk to a renewal process yields a continuous time random walk (CTRW) model for diffusion, including the possibility
of anomalous diffusion.
Transition densities of scaling limits of power law CTRWs have been shown to solve fractional Fokker-Planck equations.
We consider limits of sequences of CTRWs which arise when both waiting times and jumps are taken from an infinitesimal triangular array. We identify two different limit processes $X_t$ and $Y_t$ when waiting times precede or follow jumps, respectively.
In the limiting procedure, we keep track of the renewal times of the CTRWs and hence find two more limit processes.
Finally, we calculate the joint law of all four limit processes evaluated at a fixed time $t$.
%

\end{abstract}

\maketitle

\section{Introduction}

An i.i.d.\ sequence of jumps $J_i$ in $\Rd$ separated by an i.i.d.\ sequence of positive waiting times $W_i$ yields a jump process
known as a continuous time random walk (CTRW) \cite{Montroll1965}.
Continuous time random walks, and evolution equations for their limiting distributions, obtained 
as the step size tends to zero and the number of steps tends to infinity,
have been widely studied over the past few decades as physical models
of diffusion. 
CTRWs with power law waiting time densities and/or infinite variance jumps,
and evolution equations for their limiting distributions, formulated in terms of fractional order
partial differential equations, have been of particular interest, as physical 
models for anomalous diffusion \cite{MK2000,HLS2008}.
%
%
 The limiting distributions of CTRWs  have also been investigated using a mathematical approach based on renewal theory and limit theorems for
sums of random jumps \cite{Kotulski1995,BeMS04Ltca}.
More general results have been obtained using 
 ``triangular array limits'' \cite{BKKK2007,MeSc07Tal,SiTe04Ltm}.
A statement of the problem in this context is: For every $n\in\N$,
consider a CTRW $X^n$ arising from an iid sequence
$(J^n_i,W^n_i)_{i\in\N}$ with common law $\Pi^n$ on $\Rd\times\Rp$. Then assume that $\Pi^n$ converges weakly to the Dirac measure concentrated at $(0,0)$, and consider possible stochastic process limits of $X^n$.

In this paper, we consider coupled CTRWs where the $J_i$ are not independent of $W_i$. 
These are of particular interest in finance \citep{scalas2000fractional,MeSc06Cct}, as there is empirical evidence for stock markets with correlated waiting times and log-returns \citep{raberto2002waiting}; also see \cite{BeMS04Ltca} for a comprehensive list of coupled CTRWs having appeared in the literature.
We show that in general there are two different limit processes $X$ and $Y$ that arise when each $W_i$ precedes or succeeds $J_i$, respectively.
In the transition to the limit, 
we also keep track of the processes $G^n$ and $D^n$ given by the last renewal times $G^n_t$ before $t$ and first renewal times $D^n_t$ after $t$, and show that they jointly converge with $X$ and $Y$.
As $X$ and $Y$ turn out to be constant on every interval $[G_t,D_t)$, this enables us to 
model the time intervals in which the diffusing particle is trapped.
After defining the age process $A$ and the remaining lifetime process $R$ via  $A_t = t - G_t$ and $R_t = D_t - t$, respectively,
 we calculate the joint laws of $(X_t,A_t,Y_t,R_t)$ for each fixed time $t\geq 0$.
%
Finally, we study the ageing behaviour of $X$ and $Y$.

The remainder of this paper is organized as follows:
In section 2 we introduce our notation and establish general properties for functions that
are right-continuous with left-hand limits (\textbf{rcll}) and left continuous with right-hand limits (\textbf{lcrl}) and define two continuous mappings on a measurable subset of Skorohod space.
In section 3 we consider triangular array limits for CTRWs and we define the stochastic processes ``lagging CTRW'' $X^n$, ``leading CTRW'' $Y^n$, ``last time of renewal'' $G^n$ and ``next time of renewal'' $D^n$.
Using the continuous mapping theorem, we prove limit theorems for the weak convergence of these processes (Theorem \ref{th:CTRW-limit}).
Finally, in section 4 we consider the stochastic processes $A$ and $R$ corresponding to the age and remaining lifetime
respectively and we obtain an integral equations for the joint law of $(X_t,A_t,Y_t,R_t)$ (Theorem \ref{th:munu}).

\section{Two continuous mappings on Skorohod space}

For a separable complete metric space $E$, let $\mathbb D(E)$ be the set of all functions defined on $\Rp := [0,\infty)$ with values in $E$ which are right-continuous and have limits from the left (in short, the set of all \textbf{rcll paths}). We assume that $\mathbb D(E)$ is endowed with the (metrizable) Skorohod topology $J$ \citep{Ja87}.
Equipped with the corresponding Borel-$\sigma$-algebra $\mathcal D(E)$, $\mathbb D(E)$ is a measurable space.

\paragraph{Skorohod Subspaces}
For an element $\alpha \in \mathbb D(\Rd\times\Rp)$, we write $\alpha = (\beta, \sigma)$, where $\beta \in \mathbb D(\Rd)$ and $\sigma \in \mathbb D(\Rp)$.
We write $D_u$, $D_\uparrow$ and $D_\upuparrows$ for the sets of all such $\alpha$ which have unbounded,  non-decreasing and increasing $\sigma$, respectively.
Slight variations of the proofs of \cite[lem.13.2.3, lem.13.6.1]{Whi02} show that $D_\uparrow$ is closed in $\mathbb D(\Rd\times\Rp)$, $D_u$ is a $G_\delta$-subset of $\mathbb D(\Rd\times\Rp)$ (i.e.\ a countable intersection of open subsets) and $D_\upuparrows$ is a $G_\delta$-subset of $D_\uparrow$.
Hence we have:
\begin{lemma}\label{lem:measurable}
The sets $D_{\uparrow,u} := D_\uparrow \cap D_u$ and $D_{\upuparrows,u} := D_\upuparrows \cap D_u$ are Borel measurable.
\end{lemma}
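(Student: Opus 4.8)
The plan is to reduce everything to two elementary facts: (i) in a metrizable space, closed sets and $G_\delta$-sets belong to the Borel $\sigma$-algebra, and (ii) the Borel $\sigma$-algebra is stable under finite intersections. Since the excerpt already supplies the topological descriptions of $D_\uparrow$, $D_u$ and $D_\upuparrows$ (via the cited variants of \cite[lem.13.2.3, lem.13.6.1]{Whi02}), what remains is a short bookkeeping argument.

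First I would dispose of $D_{\uparrow,u}$. The set $D_\uparrow$ is closed in $\mathbb D(\Rd\times\Rp)$, hence Borel, and $D_u$ is a $G_\delta$-subset, say $D_u = \bigcap_{k\in\N} U_k$ with each $U_k$ open, hence also Borel. Therefore $D_{\uparrow,u} = D_\uparrow \cap D_u$ is the intersection of two members of $\mathcal D(\Rd\times\Rp)$ and so is itself Borel.

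For $D_{\upuparrows,u}$ the one point needing care is that $D_\upuparrows$ is only asserted to be a $G_\delta$-subset of $D_\uparrow$, i.e.\ relative to the subspace topology; so I would first lift this to the ambient space. By definition of the subspace topology, $D_\upuparrows = \bigcap_{k\in\N}(V_k \cap D_\uparrow)$ for suitable sets $V_k$ open in $\mathbb D(\Rd\times\Rp)$, whence $D_\upuparrows = D_\uparrow \cap \bigcap_{k\in\N} V_k$ is the intersection of the closed set $D_\uparrow$ with a $G_\delta$-set, and in particular $D_\upuparrows$ is Borel. Intersecting once more with the Borel set $D_u$ shows that $D_{\upuparrows,u} = D_\upuparrows \cap D_u$ is Borel as well.

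There is essentially no obstacle beyond the translation from the relative to the ambient topology in the last step; all the genuine work is hidden in the adapted forms of Whitt's lemmas, which are taken as given here.
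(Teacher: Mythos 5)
Your argument is correct and matches the paper's (largely implicit) reasoning: the lemma is meant to follow directly from the cited facts that $D_\uparrow$ is closed, $D_u$ is a $G_\delta$ in $\mathbb D(\Rd\times\Rp)$, and $D_\upuparrows$ is a relative $G_\delta$ in $D_\uparrow$, combined with stability of the Borel $\sigma$-algebra under intersections. Your explicit lifting of the relative $G_\delta$ description of $D_\upuparrows$ to the ambient space via $D_\upuparrows = D_\uparrow \cap \bigcap_{k} V_k$ is exactly the small bookkeeping step the paper leaves to the reader, so nothing is missing.
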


For an rcll path $\xi$ we write $\xi^-$ for its \textbf{lcrl} version (the corresponding left-continuous path having right-hand limits) given by
$$t \mapsto \lim_{\eps \downarrow 0}\xi(t-\eps),~~ t>0, ~~~~~~ \xi^-(0) = \xi(0),$$
and similarly, if $\xi$ is lcrl, we write $\xi^+$ for its rcll version
$$t\mapsto \lim_{\eps\downarrow 0}\xi(t+\eps), ~~~~~~ t\geq 0.$$
It will be convenient to use both notations $\xi^-(t) = \xi(t-)$ and $\xi^+(t) = \xi(t+)$.
For an unbounded $\xi\in\mathbb D(\Rp)$ we define its generalized inverse via
$$\xi^{-1}(t):= \inf\{s\geq 0: \sigma(s)>t\}$$
and note that $\xi^{-1}$ is a non-decreasing, unbounded element of $\mathbb D(\Rp)$.
\begin{definition}
\label{def:Phi}
For $\alpha = (\beta,\sigma)\in D_{\uparrow,u}$ write $\ell_\alpha := \sigma^{-1}$. Then put
\begin{align*}
 \Phi: D_{\uparrow,u} &\to \mathbb D(\Rd\times\Rp)  &\text{ and }&& \Psi: D_{\uparrow,u}&\to \mathbb D(\Rd\times\Rp) \\
\alpha  &\mapsto \left(\alpha^-\circ\ell_\alpha^-\right)^+ &&& \alpha &\mapsto \alpha\circ\ell_\alpha
\end{align*}
\end{definition}

Since $\ell_\alpha$ is non-decreasing and rcll, $\alpha\circ\ell_\alpha$ is rcll, and so $\Psi$ is well-defined. Moreover it is not hard to see that $\alpha^-\circ\ell_\alpha^-$ is lcrl and hence that $\Phi$ is well-defined.

Note that the topology $J$ on $\mathbb D(\Rd\times\Rp)$ induces the \textit{relative topology} or \textit{subspace topology} on the subset $D_{\uparrow,u}$. The following is the key ingredient of the continuous mapping theorem in section \ref{sec:CTRW-limit}:
\begin{proposition}
\label{prop:PhiCont}
  The mappings $\Phi$ and $\Psi$ are continuous at $D_{\upuparrows,u}$.
\end{proposition}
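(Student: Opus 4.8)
The plan is to verify sequential continuity (enough, since all the spaces in play are metrizable): fix $\alpha=(\beta,\sigma)\in D_{\upuparrows,u}$, take an arbitrary sequence $\alpha_n=(\beta_n,\sigma_n)\in D_{\uparrow,u}$ with $\alpha_n\to\alpha$ in $J$, and show $\Phi(\alpha_n)\to\Phi(\alpha)$ and $\Psi(\alpha_n)\to\Psi(\alpha)$ in $J$. The starting point is that $\Phi$ and $\Psi$ are \emph{invariant under time changes}: for every increasing homeomorphism $\lambda$ of $\Rp$ one has $(\sigma\circ\lambda)^{-1}=\lambda^{-1}\circ\sigma^{-1}$ and $(\alpha\circ\lambda)^{-}=\alpha^{-}\circ\lambda$ (hence also $\ell_{\alpha\circ\lambda}^{-}=\lambda^{-1}\circ\ell_\alpha^{-}$), and substituting these into Definition \ref{def:Phi} gives $\Psi(\alpha\circ\lambda)=\Psi(\alpha)$ and $\Phi(\alpha\circ\lambda)=\Phi(\alpha)$. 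Now $\alpha_n\to\alpha$ in $J$ means there are increasing homeomorphisms $\lambda_n$ of $\Rp$ with $\lambda_n\to e$ and $\alpha_n\circ\lambda_n\to\alpha$, both locally uniformly ($e$ the identity); since $\alpha_n\circ\lambda_n$ again belongs to $D_{\uparrow,u}$ and $\Phi,\Psi$ do not see the difference, I would replace $\alpha_n$ by $\alpha_n\circ\lambda_n$ and assume from here on that $\alpha_n\to\alpha$ \emph{locally uniformly}.

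With this reduction several things become transparent. Since $\sigma$ is increasing and unbounded, $\ell_\alpha=\sigma^{-1}$ is continuous and unbounded, so $\ell_\alpha^{-}=\ell_\alpha$. Local uniform convergence $\sigma_n\to\sigma$ together with strict monotonicity of $\sigma$ forces $\ell_{\alpha_n}=\sigma_n^{-1}\to\sigma^{-1}=\ell_\alpha$ locally uniformly (a short contradiction argument in the spirit of the inverse-map continuity in \cite{Whi02}: otherwise, passing to a subsequence and a limiting level, one would obtain a point at which $\sigma$ is simultaneously $\le$ and $>$ that level); moreover the flat stretches of $\sigma_n$ have lengths tending to $0$ locally uniformly, so the jumps of $\ell_n:=\ell_{\alpha_n}$ vanish and $\ell_{\alpha_n}^{-}\to\ell_\alpha$ locally uniformly too.

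The remaining, and main, step is the composition. One may not simply invoke continuity of $(x,\phi)\mapsto x\circ\phi$, because the plateaus of $\ell:=\ell_\alpha$ lie exactly over the jumps of $\sigma$, hence over discontinuities of $\alpha$, which is precisely the configuration at which that map fails to be continuous. Instead I would construct the Skorohod time changes by hand, starting from the structure of the target. The jumps of $\Psi(\alpha)=\alpha\circ\ell$ are of two kinds: for each jump time $u_0$ of $\sigma$, an interval $I_{u_0}:=[\sigma(u_0-),\sigma(u_0)]$ on which $\ell\equiv u_0$ and $\Psi(\alpha)\equiv\alpha(u_0)$, with a jump from $\alpha(u_0-)$ to $\alpha(u_0)$ at the left endpoint of $I_{u_0}$; and, at each jump time $u$ of $\beta$ where $\sigma$ is continuous, a jump of $\Psi(\alpha)$ at $\sigma(u)$. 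Under local uniform convergence a jump of $\sigma$ at $u_0$ forces a jump of $\sigma_n$ at the \emph{same} point $u_0$ of nearly the same size, so $\ell_n\equiv u_0$ on $I^{n}_{u_0}:=[\sigma_n(u_0-),\sigma_n(u_0)]$, with $I^{n}_{u_0}\to I_{u_0}$ and $\Psi(\alpha_n)\equiv\alpha_n(u_0)\to\alpha(u_0)$ there; similarly $\beta_n$ jumps near $u$, making $\Psi(\alpha_n)$ jump near $\sigma_n(u)\to\sigma(u)$. Given $S>0$ and $\eps>0$, only finitely many such jumps have size $>\eps$ within $[0,S]$; I would take $\tilde\lambda_n$ to be the piecewise-affine homeomorphism that maps each of those finitely many intervals $I_{u_0}$ (and jump times $\sigma(u)$) onto its $n$-th counterpart and interpolates linearly in between, verify $\tilde\lambda_n\to e$ uniformly on $[0,S]$, and bound $\|\Psi(\alpha_n)\circ\tilde\lambda_n-\Psi(\alpha)\|_{[0,S]}$: on the matched intervals the two sides are the nearby constants $\alpha_n(u_0)$ and $\alpha(u_0)$; off them $\ell_n\circ\tilde\lambda_n$ stays uniformly close to $\ell$ and $\alpha_n$ to $\alpha$; and the remaining jumps, of size $\le\eps$, contribute at most about $2\eps$. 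Letting $n\to\infty$ and then $\eps\downarrow0$ yields $\Psi(\alpha_n)\to\Psi(\alpha)$ in $J$. The argument for $\Phi$ is the same, except that on each $I^{n}_{u_0}$ the relevant constant is $\alpha_n(u_0-)\to\alpha(u_0-)$ and the jump is placed at the \emph{right} endpoint of the plateau, matching $\Phi(\alpha)$; a little extra care handles the regularisations $(\cdot)^{-}$, $(\cdot)^{+}$ and the (vanishing) jumps of $\ell_n$, but near each decisive jump $\sigma_n$ does jump at $u_0$, which controls $\ell_n$ and $\ell_n^{-}$ there.

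I expect this last step to be the main obstacle. Composition with the inverse is continuous here \emph{only} because the plateaus of $\ell_{\alpha_n}=\sigma_n^{-1}$ and the jumps of $\alpha_n$ are both governed by the jumps of $\sigma_n$; it is this rigid alignment that rules out a ``wrong-endpoint'' jump of the composition --- informally, the shift in the inner clock cancels against the compensating shift in the outer path. Turning this into a uniform estimate, in particular controlling the possibly infinitely many small jumps of $\sigma_n$ that do not persist in the limit, is the delicate point; the time-change reduction at the outset is what keeps it manageable, by sparing one from composing several time changes.
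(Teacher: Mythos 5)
Your argument is correct in substance, but it takes a genuinely different route from the paper. You first exploit the invariance $\Phi(\alpha\circ\lambda)=\Phi(\alpha)$, $\Psi(\alpha\circ\lambda)=\Psi(\alpha)$ under increasing homeomorphisms $\lambda$ of $\Rp$ (which indeed follows from $(\sigma\circ\lambda)^{-1}=\lambda^{-1}\circ\sigma^{-1}$ and $(\alpha\circ\lambda)^-=\alpha^-\circ\lambda$) to reduce $J$-convergence to locally uniform convergence $\alpha_n\to\alpha$; then, since uniform convergence pins the jumps of $\sigma_n$ and $\beta_n$ to the exact jump times of $\sigma$ and $\beta$ with comparable sizes, you build by hand piecewise-affine time changes matching the finitely many plateaus $[\sigma(u_0-),\sigma(u_0)]$ and jump times of size $>\eps$ to their $n$-th counterparts, and control everything else by the c\`adl\`ag modulus. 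The paper instead never constructs time changes: it verifies the sequential criterion of Proposition \ref{prop:EK} (conditions (I)--(III), together with the lcrl analogue proved as a separate lemma for the $\Phi$-case) for $\delta_n=\Psi(\alpha_n)$ and $\gamma_n=\Phi(\alpha_n)$, using \cite[cor.~13.6.4]{Whi02} for $\ell_{\alpha_n}\to\ell_\alpha$ and a case split according to whether $\ell_\alpha$ is left-/right-increasing or constant at $t$; the decisive step there is the same structural fact you isolate --- because $\sigma$ is the last coordinate of $\alpha$, the constraint $\sigma_n(\tau_n)\geq t_n$ (resp.\ $\sigma_n^-(\tau_n^-)\leq t_n$) rules out convergence to the wrong element of $\{\alpha(\tau),\alpha^-(\tau)\}$ over a plateau of $\ell_\alpha$. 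Your route is more constructive and self-contained, but its last step (the uniform estimate off the matched features, the possibly infinitely many small flat stretches of $\sigma_n$, and the $(\cdot)^-,(\cdot)^+$ regularisations in $\Phi$) still requires the full modulus bookkeeping that you only sketch; the paper's criterion-based proof trades that bookkeeping for the auxiliary lcrl lemma and a short finite case analysis. Both proofs ultimately rest on the rigid alignment of the plateaus of $\ell_{\alpha_n}$ with the jumps of $\alpha_n$, so I see no gap in your approach, only unexecuted (but standard) estimates.
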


The proof is based on the characterization of convergence in $\mathbb D(E)$ taken from \cite[th~3.6.5]{EtK86}; for convenience, it is restated in proposition \ref{prop:EK}.
We use the abbreviation $x_n\to A$ if $A\subset \mathbb D(E)$ contains all limit points of the sequence $\{x_n\}$. 
\begin{proposition}
\label{prop:EK}
Let $\{x_n\}_{n\in\N}\subset \mathbb D(E)$ and $x\in \mathbb D(E)$.
Then $x_n\to x$ in $\left(\mathbb D(E),J\right)$ if and only if
whenever $\{t_n\}_{n\in\N}\subset \Rp$, $t\geq 0$, and $\lim t_n = t$, the following conditions hold:
\begin{enumerate}[(I)]
\item
$x_n(t_n) \to \{x(t),x^-(t)\}$
\item
If $x_n(t_n)\to x(t)$ and $\{s_n\}_{n\in\mathbb N} \subset \Rp$
is such that $s_n\geq t_n$, $s_n\to t$, then
$x_n(s_n)\to x(t)$.
\item
If $x_n(t_n)\to x^-(t)$ and $\{s_n\}_{n\in\mathbb N} \subset \Rp$
is such that $0\leq s_n\leq t_n$, $s_n\to t$, then
$x_n(s_n)\to x^-(t) $.
\end{enumerate}
\end{proposition}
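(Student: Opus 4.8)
The plan is to prove sequential continuity at each $\alpha\in D_{\upuparrows,u}$ (legitimate since $J$ is metrizable) by verifying the characterisation of Proposition \ref{prop:EK} directly for the two image sequences. Fix $\alpha=(\beta,\sigma)\in D_{\upuparrows,u}$ and a sequence $\alpha^n=(\beta^n,\sigma^n)\to\alpha$ in the relative topology on $D_{\uparrow,u}$; I must show $\Psi(\alpha^n)\to\Psi(\alpha)$ and $\Phi(\alpha^n)\to\Phi(\alpha)$ in $(\mathbb D(\Rd\times\Rp),J)$. Abbreviate $\ell:=\ell_\alpha=\sigma^{-1}$ and $\ell^n:=\ell_{\alpha^n}=(\sigma^n)^{-1}$. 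The decisive structural fact is that strict monotonicity of $\sigma$ forces $\ell$ to be \emph{continuous}: discontinuities of a generalized inverse come only from flat stretches of $\sigma$, which membership in $D_\upuparrows$ excludes. This is precisely why continuity is asserted at $D_{\upuparrows,u}$ and not on all of $D_{\uparrow,u}$, and it also gives $\ell^-=\ell$, so that $\Phi(\alpha)=(\alpha^-\circ\ell)^+$.

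First I would settle the convergence of the inverses. Since $\sigma^n\to\sigma$ with $\sigma$ strictly increasing and all paths unbounded, a slight variant of the inverse-map results in \cite[\S13.6]{Whi02} gives $\ell^n\to\ell$; because the limit $\ell$ is continuous this upgrades to local uniform convergence. Consequently, for every sequence $t_n\to t$ the \emph{inner times} $s_n:=\ell^n(t_n)$ satisfy $s_n\to s:=\ell(t)$. The whole proof then rests on transporting conditions (I)--(III) for the images at $t_n\to t$ back to the same conditions for $\alpha^n\to\alpha$ at $s_n\to s$. For $\Psi(\alpha^n)=\alpha^n\circ\ell^n$ a short computation using continuity and monotonicity of $\ell$ gives $\Psi(\alpha)(t)=\alpha(s)$ and
$$\Psi(\alpha)^-(t)=\begin{cases}\alpha^-(s),&\text{if }\ell(u)<s\text{ for all }u<t,\\ \alpha(s),&\text{if }\ell(u)=s\text{ for }u\text{ in a left neighbourhood of }t,\end{cases}$$
so $\{\Psi(\alpha)(t),\Psi(\alpha)^-(t)\}$ is $\{\alpha(s),\alpha^-(s)\}$ in the strictly-increasing case and the singleton $\{\alpha(s)\}$ in the plateau case. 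In the strictly-increasing case, condition (I) for $\alpha^n$ (namely $\alpha^n(s_n)\to\{\alpha(s),\alpha^-(s)\}$) is exactly (I) for $\Psi(\alpha^n)$, while (II) and (III) transfer because monotonicity of $\ell^n$ converts $u_n\gtrless t_n$ into $\ell^n(u_n)\gtrless s_n$ with the same limit $s$, matching the target value $\alpha(s)$ or left limit $\alpha^-(s)$ respectively.

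The genuine obstacle is the \textbf{plateau case}, where $\sigma$ jumps at $s$ with $\sigma(s-)<t<\sigma(s)$: here condition (I) for $\alpha^n$ only places $\alpha^n(s_n)$ in the two-point set $\{\alpha(s),\alpha^-(s)\}$, whereas I need the \emph{specific} limit $\alpha(s)$. The resolution uses the vector nature of $\alpha$. The jump of $\sigma$ at $s$ forces $\ell^n(t_n)$ onto the approximating jump of $\sigma^n$, and the jump-approximation built into $J$-convergence yields $\sigma^n(s_n)\to\sigma(s)$, the post-jump value, which is strictly larger than $\sigma(s-)$. Since the two candidates $\alpha(s)$ and $\alpha^-(s)$ have $\sigma$-coordinates $\sigma(s)\neq\sigma(s-)$ separated by that jump, knowledge of the $\sigma$-coordinate pins down the whole vector, giving $\alpha^n(s_n)\to\alpha(s)$; conditions (II)--(III) are then immediate because $\Psi(\alpha)$ is continuous at such $t$. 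This step is exactly why the pair $(\beta,\sigma)$ must be tracked \emph{jointly} and why $\sigma$ must be strictly increasing.

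For $\Phi=(\alpha^-\circ\ell)^+$ the argument is the mirror image, with every rcll value replaced by its lcrl counterpart, so that on a plateau $\Phi$ selects the pre-jump value $\alpha^-(s)$ where $\Psi$ selects $\alpha(s)$ — this is the lagging/leading dichotomy — and the same ``$\sigma$-coordinate pins the vector'' argument applies verbatim. I expect the plateau case to be the crux: it is the only place where the reduction to conditions (I)--(III) for $\alpha^n$ leaves a genuine ambiguity between a value and a one-sided limit, and resolving it is precisely what the hypotheses of $D_{\upuparrows,u}$ are designed to make possible.
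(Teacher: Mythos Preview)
Your proposal does not address the stated proposition. Proposition~\ref{prop:EK} is the Ethier--Kurtz characterization of $J$-convergence in $\mathbb D(E)$; what you have written is a proof sketch of Proposition~\ref{prop:PhiCont} (continuity of $\Phi$ and $\Psi$ at $D_{\upuparrows,u}$), which \emph{uses} Proposition~\ref{prop:EK} as its main tool. The paper itself does not prove Proposition~\ref{prop:EK}: it is simply restated from \cite[th.~3.6.5]{EtK86} for convenience, and no original argument is given or expected.

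As an aside, the sketch you have written --- viewed as an attempt at Proposition~\ref{prop:PhiCont} --- is essentially the paper's own proof of that result: both reduce to conditions (I)--(III) along the inner times $\ell_{\alpha^n}(t_n)\to\ell_\alpha(t)$, split into the ``left-increasing'' and ``plateau'' cases for $\ell_\alpha$, and in the plateau case use the $\sigma$-coordinate of $\alpha$ (which must jump at $\ell_\alpha(t)$) to rule out convergence to the wrong one-sided value. The paper also sets up the mirrored lcrl conditions (I$^-$)--(III$^-$) explicitly to handle $\Phi$. But none of this constitutes a proof of the characterization itself, which is what was asked.
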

We will also need the corresponding version for lcrl paths:
\begin{lemma}
 Proposition \ref{prop:EK} holds if conditions (I), (II) and (III) are replaced by
\begin{enumerate}[(I$^-$)]
\item
$x^-_n(t_n) \to \{x(t),x^-(t)\}$
\item
If $x^-_n(t_n)\to x(t)$ and $\{s_n\}_{n\in\mathbb N} \subset \Rp$
is such that $s_n\geq t_n$, $s_n\to t$, then
$x^-_n(s_n)\to x(t)$.
\item
If $x^-_n(t_n)\to x^-(t)$ and $\{s_n\}_{n\in\mathbb N} \subset \Rp$
is such that $0\leq s_n\leq t_n$, $s_n\to t$, then
$x^-_n(s_n)\to x^-(t) $.
\end{enumerate}
\begin{proof}
Without loss of generality we assume $t_n > 0$ for all $n$. Then there is a sequence $\{\eps_n\}_{n\in\N}$, $t_n> \eps_n >0$, $\lim \eps_n = 0$, such that
\begin{align*}
  d(x_n^-(t_n),x_n(t_n-\eps_n)) & \to 0, & d(x_n(t_n),x_n^-(t_n+\eps_n)) &\to0.
\end{align*}
Since $\lim t_n-\eps_n = \lim t_n + \eps_n = t$, the equivalence (I)$\Leftrightarrow$(I$^-$) follows. Suppose now that (II) holds, and that $s_n\geq t_n$ is such that $\lim s_n = \lim t_n = t$, $x^-_n(t_n) \to x(t)$. Then there are sequences $\eps_n$ and $\eps'_n$ tending to $0$ and satisfying
\begin{align*}
  d(x^-(t_n),x(t_n-\eps_n)) & \to 0, & d(x^-(s_n),x(s_n-\eps'_n)) \to 0
\end{align*}
which we can choose in such a way that $0<\eps'_n<\eps_n$ for all $n$. Then $s_n - \eps'_n \geq t_n - \eps_n$ and $s_n - \eps'_n\to t$, hence
\begin{align*}
  x(t) = \lim x^-_n(t_n) = \lim x(t_n-\eps_n) = \lim x(s_n - \eps'_n) = \lim x^-(s_n),
\end{align*}
where the third equality follows from (II). This shows the implication (II)$\Rightarrow$(II$^-$). The remaining parts (II$^-$)$\Rightarrow$(II) and (III)$\Leftrightarrow$(III$^-$) are shown similarly.
\end{proof}
\end{lemma}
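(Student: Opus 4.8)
The plan is to avoid reproving the characterization of $J$-convergence from scratch and instead to build directly on Proposition \ref{prop:EK}. That proposition already tells us that $x_n\to x$ in $(\mathbb D(E),J)$ is equivalent to conditions (I), (II), (III) holding along \emph{every} sequence $t_n\to t$. Consequently it suffices to prove that the package of conditions (I)--(III), quantified over all such sequences, is equivalent to the package (I$^-$)--(III$^-$). I would establish this condition by condition, the whole argument resting on a single structural fact: for an rcll path $\xi$ the value $\xi^-$ at a point is the limit of $\xi$ at slightly smaller arguments, and conversely the rcll value $\xi$ is the limit of $\xi^-$ at slightly larger arguments.

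The one technical device I would set up first is a \emph{common null sequence}. Assuming without loss of generality that $t_n>0$, the identities $x_n^-(t_n)=\lim_{\eps\downarrow0}x_n(t_n-\eps)$ and $x_n(t_n)=\lim_{\eps\downarrow0}x_n^-(t_n+\eps)$ allow a diagonal choice of $\eps_n$ with $0<\eps_n<t_n$ and $\eps_n\to0$ such that $d(x_n^-(t_n),x_n(t_n-\eps_n))\to0$ and $d(x_n(t_n),x_n^-(t_n+\eps_n))\to0$. Since $t_n-\eps_n\to t$ and $t_n+\eps_n\to t$ as well, these two estimates let me interchange $x_n$ and $x_n^-$ at the cost of an infinitesimal shift of the time argument, without affecting any limit or any limit point.

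With this tool in hand the equivalences fall out quickly. For (I)$\Leftrightarrow$(I$^-$) I would apply (I) to the shifted sequence $s_n:=t_n-\eps_n\to t$, obtaining $x_n(t_n-\eps_n)\to\{x(t),x^-(t)\}$; because $x_n^-(t_n)$ has the same limit points this is exactly (I$^-$), and the reverse implication uses the sequence $t_n+\eps_n$ with the second approximation. For (II)$\Rightarrow$(II$^-$) I would assume $s_n\geq t_n$, $s_n\to t$ and $x_n^-(t_n)\to x(t)$, then pick $\eps_n,\eps'_n\downarrow0$ realizing the left-approximation at $t_n$ and $s_n$ respectively, arranged so that $0<\eps'_n<\eps_n$; this forces $s_n-\eps'_n\geq t_n-\eps_n$ (both tending to $t$) and $x_n(t_n-\eps_n)\to x(t)$, so (II) applied to the shifted pair gives $x_n(s_n-\eps'_n)\to x(t)$, whence $x_n^-(s_n)\to x(t)$. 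The equivalence (III)$\Leftrightarrow$(III$^-$) runs identically, now choosing $\eps'_n>\eps_n$ so that the reversed inequality $s_n\leq t_n$ survives the left-shift, with the target $x(t)$ replaced by $x^-(t)$; the converse implications (II$^-$)$\Rightarrow$(II) and (III$^-$)$\Rightarrow$(III) follow symmetrically by shifting to the right through the second approximation.

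The only genuinely delicate point, and the one I would treat with the most care, is the coordinated choice of the two null sequences $\eps_n,\eps'_n$: they must simultaneously realize the left-approximation at the two distinct arguments $t_n$ and $s_n$, respect the ordering constraint ($\eps'_n<\eps_n$ for (II), $\eps'_n>\eps_n$ for (III)) that transfers the monotone relation between $s_n$ and $t_n$ to the shifted points, and keep $s_n-\eps'_n\geq0$ so that the shifted sequence remains admissible for Proposition \ref{prop:EK}. Everything else is routine bookkeeping, since metric convergence and the set of limit points are insensitive to the vanishing perturbations the shifts introduce; the remaining edge cases (such as $t=0$ or $s_n$ very small) are absorbed into the initial reduction by noting that $x_n^-(0)=x_n(0)$, so that no shift is required there.
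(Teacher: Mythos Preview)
Your proposal is correct and follows essentially the same route as the paper: both arguments reduce to the equivalence of (I)--(III) with (I$^-$)--(III$^-$) via the single trick of approximating $x_n^-(t_n)$ by $x_n(t_n-\eps_n)$ (and conversely $x_n(t_n)$ by $x_n^-(t_n+\eps_n)$) along a diagonally chosen null sequence, then applying the original conditions to the shifted times. Your write-up is in fact more careful than the paper's about the ordering constraint $\eps'_n\lessgtr\eps_n$ and the edge case $s_n=0$.
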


\begin{proof}[Proof of proposition~\ref{prop:PhiCont}]
Throughout this proof, let $\{\alpha_n\}_{n\in\N}\subset D_{\uparrow,u}$, $\alpha\in D_{\upuparrows,u}$,
$\alpha_n\to\alpha$ in $\mathbb D(\Rd\times\Rp)$ with respect to the $J$-topology, $t\geq 0$,  $\{t_n\}_{n\in\N}\subset \Rp$,
$\{s_n\}_{n\in\N}\subset \Rp$, $t_n \to t$, $s_n \to t$.
We put $\gamma_n:= \Phi(\alpha_n)$, $\gamma := \Phi(\alpha)$, $\delta_n := \Psi(\alpha_n)$, $\delta := \Psi(\alpha)$
and show that (I), (II) and (III) are satisfied with $x_n$ and $x$ replaced by $\delta_n$ and $\delta$, and that (I$^-$), (II$^-$) and (III$^-$) are satisfied with $x_n$ and $x$ replaced by $\gamma_n$ and $\gamma$.

First, note that by \cite[cor.13.6.4]{Whi02},
\begin{align}
\label{eq:tauconv}
\ell_{\alpha_n} \to \ell_\alpha ~~~ \text{ in } \mathbb D(\Rp).
\end{align}
We define $\tau_n:=\ell_{\alpha_n}(t_n)$, $\tau := \ell_{\alpha}(t)$, $\xi_n := \ell_{\alpha_n}(s_n)$.
Since $\alpha \in D_{\upuparrows,u}$, $\ell_{\alpha}$ is continuous, and hence (I) applied to $\ell_{\alpha_n}$ yields $\lim \tau_n = \lim \xi_n =  \tau$.

If $t=0$, then $\tau = 0$ and $\delta_n(t_n) = \alpha_n(\tau_n) \to \alpha(0) = \delta(0)$ by (I) applied to $\alpha_n$, and one checks that (I), (II) and (III) are satisfied for $\delta_n$.

Assume now that $t>0$, and that $\ell_\alpha$ is left-increasing at $t$, i.e.\ $s<t \Rightarrow \ell_\alpha(s)<\ell_\alpha(t)$. Then $\delta^-(t) = \alpha^-(\tau)$, and
\begin{align*}
  \delta_n(t_n) = \alpha_n(\tau_n) \to \{\alpha^-(\tau), \alpha(\tau)\} = \{\delta(t), \delta^-(t)\},
\end{align*}
showing (I) for $\delta_n$. In order to show (II) (resp.\ (III)) for $\delta_n$, suppose $\delta_n(t_n) \to \delta(t)$ (resp.\ $\delta_n(t_n) \to \delta^-(t)$). Then
\begin{align*}
\alpha_n(\tau_n) = \delta_n(t_n) &\to \delta(t) = \alpha(\tau)  &({\rm resp.}&&  \alpha_n(\tau_n) = \delta_n(t_n) &\to \delta^-(t) = \alpha^-(\tau)).
\end{align*}
Since $\ell_{\alpha_n}$ is non-decreasing, $s_n\geq t_n$ (resp.\ $s_n\leq t_n$) implies $\xi_n \geq \tau_n$ (resp.\ $\xi_n \leq \tau_n$).
Then (II) (resp.\ (III)) applied to $\alpha_n$ yields
\begin{align*}
\lim \delta_n(s_n) = \lim \alpha_n(\xi_n) = \alpha(\tau)  &= \delta (t),
\\ ({\rm resp.}~~ \lim \delta_n(s_n) = \lim \alpha_n(\xi_n) = \alpha^-(\tau)  &= \delta^-(t),~~)
\end{align*}
showing (II) (resp.\ (III)) for $\delta_n$.

If $\ell_\alpha$ is left-constant at $t$, i.e.\ not left-increasing, then $\alpha(\ell_\alpha(t-))=\alpha(\ell_\alpha(t))$ and hence $\delta$ is continuous at $t$. Hence (I),(II) and (III) for $\delta_n$ reduces to showing $\delta_n(t_n) \to \delta(t)$.
(I) applied to $\alpha_n$ yields $\delta_n(t_n) = \alpha_n(\tau_n) \to \{\alpha(\tau), \alpha^-(\tau)\}$.
Since $\ell_\alpha$ is left-constant at $t$, we have $\sigma^-(\tau)<t\leq \sigma(\tau)$. As $\sigma$ is the last coordinate of $\alpha$, we see that $\alpha(\tau) \neq \alpha^-(\tau)$. Suppose now that $\alpha_n(\tau_n)\to \alpha^-(\tau)$. It follows that $\sigma_n(\tau_n)\to \sigma^-(\tau)<t$, which contradicts $\sigma_n(\tau_n)\geq t_n \to t$.
Thus $\alpha_n(\tau_n)\to \alpha(\tau)$ and $\delta_n(t_n) \to \delta(t)$.
We have proven the continuity statement about $\Psi$.

We turn to $\gamma_n$ and the conditions (I$^-$), (II$^-$) and (III$^-$), and define
$\tau^-_n:=\ell^-_{\alpha_n}(t_n)$ and $\xi^-_n:= \ell^-_{\alpha_n}(s_n)$.
As before, (\ref{eq:tauconv}), the continuity of $\ell_\alpha$ and (I) applied to $\ell_{\alpha_n}$ yield
$\lim \tau^-_n = \lim \xi^-_n = \tau$.
Note that $\gamma^- = \alpha^-\circ \ell_\alpha$.  Assume first that $\ell_\alpha$ is right-increasing at $t$. Then $\gamma(t) = \alpha(\tau)$, and
\begin{align*}
  \gamma^-_n(t_n) = \alpha^-_n(\tau^-_n) \to \{\alpha(\tau), \alpha^-(\tau)\} = \{\gamma(t), \gamma^-(t)\},
\end{align*}
by (I$^-$) applied to $\alpha^-_n$, showing (I$^-$) for $\gamma^-_n$.
In order to show (II$^-$) (resp.\ (III$^-$)) for $\gamma^-_n$, suppose $\gamma^-_n(t_n) \to \gamma(t)$ (resp.\ $\gamma^-_n(t_n) \to \gamma^-(t)$). Then
\begin{align*}
\alpha^-_n(\tau^-_n) = \gamma^-_n(t_n) &\to \gamma(t) = \alpha(\tau)  &({\rm resp.}&&  \alpha^-_n(\tau^-_n) = \gamma^-_n(t_n) &\to \gamma^-(t) = \alpha^-(\tau)).
\end{align*}
Since $\ell_{\alpha_n}$ and $\ell^-_{\alpha_n}$ are non-decreasing, $s_n\geq t_n$ (resp.\ $s_n\leq t_n$) implies $\xi^-_n \geq \tau^-_n$ (resp.\ $\xi^-_n \leq \tau^-_n$).
Then (II$^-$) (resp.\ (III$^-$)) applied to $\alpha_n$ yields
\begin{align*}
\lim \gamma^-_n(s_n) = \lim \alpha_n(\xi^-_n) = \alpha(\tau)  &= \gamma (t),
\\ ({\rm resp.}~~ \lim \gamma^-_n(s_n) = \lim \alpha_n(\xi^-_n) = \alpha^-(\tau)  &= \gamma^-(t),~~)
\end{align*}
showing (II$^-$) (resp.\ (III$^-$)) for $\gamma^-_n$.

If $\ell_\alpha$ is right-constant at $t$, i.e.\ not right-increasing, then $\alpha^-(\ell_\alpha(t+))=\alpha^-(\ell_\alpha(t))$ and hence $\gamma^-$ is continuous at $t$. Hence (I$^-$),(II$^-$) and (III$^-$) for $\gamma^-_n$ reduces to showing $\gamma^-_n(t_n) \to \gamma(t)$.
(I$^-$) applied to $\alpha_n$ yields $\gamma^-_n(t_n) = \alpha^-_n(\tau^-_n) \to \{\alpha(\tau), \alpha^-(\tau)\}$.
Since $\ell_\alpha$ is right-constant at $t$, we have $\sigma^-(\tau)\leq t < \sigma(\tau)$, and we see that $\alpha(\tau) \neq \alpha^-(\tau)$. Suppose now that $\alpha^-_n(\tau^-_n)\to \alpha(\tau)$. It follows that $\sigma^-_n(\tau^-_n)\to \sigma(\tau)>t$, which contradicts $\sigma^-_n(\tau^-_n)\leq t_n \to t$.
Thus $\alpha^-_n(\tau^-_n)\to \alpha^-(\tau)$ meaning that $\gamma^-_n(t_n) \to \gamma^-(t) = \gamma(t)$.
We have proven the statement about $\Phi$ and thus our lemma.
\end{proof}

\section{CTRW Limit Theorems}
\label{sec:CTRW-limit}
\paragraph{Triangular arrays}
Following \cite{MeSc07Tal}, we define the triangular array
\begin{align}
\label{eq:triangular-array}
\Delta := \{(J_i^n,W_i^n); n,i\in\N\}
\end{align}
of $\Rd$-valued jumps $J_i^n$ and $\Rp$-valued waiting times $W_i^n$, which are random variables on some probability space $(\tilde \Omega, \tilde{\mathcal F}, \tilde{\mathbb P})$.
For every $n\in\N$, the sequence $\{(J^n_i,W^n_i)\}_{i\in\N}$ is assumed i.i.d.; note however that we do not assume that $J^n_i$ and $W^n_i$ are independent.
We define the sequence of row sum processes $(B^n,S^n) := \{(B^n_t,S^n_t)\}_{t\geq 0}$ via
\begin{align}
\label{eq:row-sum}
(B^n_t,S^n_t) := \sum_{k=1}^{\lfloor nt \rfloor} (J^n_k, W^n_k)
\end{align}
and note that all these processes start at $(0,0) \in \Rd\times\Rp$.
A standard argument involving finite collections of coordinate projections shows that $(B^n,S^n)$, seen as a map from $\tilde{\Omega}$ to $\mathbb D(\Rd\times\Rp)$, is measurable, i.e.\ an rcll path-valued random variable. We write $\pr_n$ for its distribution, which is a probability measure on $\mathbb D(\Rd\times\Rp)$, and note that $\pr_n(D_{\uparrow,u}) = 1$.
Similarly to \cite{MeSc07Tal},  we assume that the laws $\pr^n$ converge weakly on $(\mathbb D(\Rd\times\Rp),J)$ to a limit law $\pr$, which is the law of a \levy\ process. Setting
$(\Omega,\mathcal F^0) := \left(\mathbb D(\Rd\times\Rp),\mathcal D(\Rd\times\Rp)\right)$
and letting
\begin{align*}
(B_t,S_t) : \mathbb D(\Rd\times\Rp) &\to \Rd\times\Rp, \\
\omega &\mapsto \omega(t),
\end{align*}
for all $t\geq 0$, we produce the probability space $(\Omega,\mathcal F^0, \pr)$ on which $(B,S) = \{(B_t,S_t)\}_{t\geq 0}$ is a \levy\ process.
We let $\mathcal F^0_t$ be the $\sigma$-field generated by all $(B_u,S_u)$ where $0\leq u\leq t$, we denote the $\pr$-completion of $\mathcal F^0$ and $\mathcal F^0_t$ by $\mathcal F$ and $\mathcal F_t$, and work on the filtered probability space $(\Omega, \mathcal F, (\mathcal F_t)_{t\geq 0},\pr) $ from now on.
Observe that $S$ is a subordinator. We also make the following assumption for the rest of this work:
\begin{align} \label{eq:assumption}
   \textbf{The \levy\ process $(B,S)$ is not compound Poisson.}
\end{align}

\paragraph{Continuous Time Random Walks}

\begin{definition}
\label{def:XYAR}
For each $n\in\N$, the process $N^n:= \{N^n_t\}_{t\geq 0}$ given by
\begin{align*}
 N^n_t = \max\left\lbrace n\in\N\cup\{0\}: W_1 + \ldots + W_n \leq t\right\rbrace
\end{align*}
is called \textbf{renewal process}.
Moreover, the processes $X$, $Y$, $G$ and $D$ given by
\begin{align*}
\left(X^n_t,G^n_t\right) &= \sum_{k=1}^{N^n_t} (J^n_k, W^n_k),
&  (Y^n_t,D^n_t) &= \sum_{k=1}^{N^n_t+1} (J^n_k, W^n_k),
\end{align*}
are called \textbf{lagging CTRW}, \textbf{leading CTRW}, \textbf{last} and \textbf{next time of renewal}, respectively.
\end{definition}

\begin{lemma}
\label{lem:matching}
Let $\Phi$ and $\Psi$ be as in Definition \ref{def:Phi}. Then
\begin{align*}
\Phi\circ (B^n,S^n) &= (X^n,G^n), & \Psi\circ(B^n,S^n) &= (Y^n, D^n),  & n&\in \N.
\end{align*}
\begin{proof}
Let $L^n = (S^n)^{-1}$, and note that by definition of $N^n_t$, $S^n_t$ and the generalized inverse, the paths of $L^n$ are rcll step functions starting at $1/n$ with jumps of size $1/n$  occuring at the end of each renewal epoch; this yields $nL^n_t = N^n_t+1$, and the second equality follows.

Turning to the first statement, note that the left-hand limits of $(B^n,S^n)$ and $L^n$ satisfy
\begin{align*}
(B^n_{t-},S^n_{t-}) &= \sum_{k=1}^{\lceil nt-1 \rceil} (J^n_k,W^n_k),
& nL^n_{t-} &= N^n_{t-} + 1.
\end{align*}
From the formula $(\Phi(\beta,\sigma))^- = \alpha^-\circ \sigma^-$, valid for all $\alpha=(\beta,\sigma)\in D_{\uparrow,u}$, it follows that
$$(\Phi\circ(B^n,S^n))(t-) = \sum_{k=1}^{N^n_{t-}} (J^n_k,W^n_k) = \left(X^n_{t-},G^n_{t-}\right).$$
Taking right-hand limits again yields the first equality.
\end{proof}
\end{lemma}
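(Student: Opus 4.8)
The plan is to write down the generalized inverse $L^n := (S^n)^{-1}$ explicitly and tie it to the renewal counting process; both claimed identities then drop out by unwinding Definition~\ref{def:Phi}. I would organize this in three short steps.

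\emph{Step 1 (the inverse).} Since $S^n_s = T^n_{\lfloor ns\rfloor}$ with $T^n_m := W^n_1 + \dots + W^n_m$ and $T^n_0 := 0$, the path of $S^n$ is a rcll step function that jumps at the points $m/n$. Feeding this into $L^n_t = \inf\{s\ge 0 : S^n_s > t\}$ gives $L^n_t = m^\ast(t)/n$ with $m^\ast(t) := \min\{m\ge 0 : T^n_m > t\}$; comparing with $N^n_t = \max\{m\ge 0 : T^n_m\le t\}$ yields
\begin{align*}
 n L^n_t = N^n_t + 1, \qquad t\ge 0 .
\end{align*}
I would also note that $L^n$ is a non-decreasing, unbounded, rcll step function which is constant on a right-neighbourhood of each $t$ (the renewal times $T^n_m$ form a strictly increasing sequence tending to $\infty$ a.s.\ on $D_{\uparrow,u}$, hence cannot accumulate), and that the relevant waiting time is genuinely positive, $W^n_{m^\ast(t)} = T^n_{m^\ast(t)} - T^n_{m^\ast(t)-1} > 0$, so that vanishing waiting times create no difficulty.

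\emph{Step 2 (the map $\Psi$).} This is immediate: since $(N^n_t+1)/n$ is an integer multiple of $1/n$,
\begin{align*}
 \Psi\bigl((B^n,S^n)\bigr)(t) = (B^n,S^n)\bigl(L^n_t\bigr) = (B^n,S^n)_{(N^n_t+1)/n} = \sum_{k=1}^{N^n_t+1}(J^n_k,W^n_k) = (Y^n_t, D^n_t),
\end{align*}
which is the second equality.

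\emph{Step 3 (the map $\Phi$).} With $\alpha := (B^n,S^n)$ I would evaluate $\Phi(\alpha)(t) = \bigl(\alpha^-\circ\ell_\alpha^-\bigr)^+(t) = \lim_{\eps\downarrow0}\alpha^-\bigl(L^n_{(t+\eps)-}\bigr)$. By Step~1, $L^n$ is constant equal to $L^n_t = (N^n_t+1)/n$ on a right-neighbourhood of $t$, so $L^n_{(t+\eps)-} = L^n_t$ for all small $\eps>0$ and hence $\Phi(\alpha)(t) = \alpha^-\bigl((N^n_t+1)/n\bigr)$. Since the left-hand limit of $(B^n,S^n)$ at a point $m/n$ equals the partial sum over $k\le m-1$, taking $m = N^n_t+1$ gives
\begin{align*}
 \Phi\bigl((B^n,S^n)\bigr)(t) = (B^n,S^n)_{((N^n_t+1)/n)-} = \sum_{k=1}^{N^n_t}(J^n_k,W^n_k) = (X^n_t,G^n_t),
\end{align*}
which is the first equality; the same computation also covers $t=0$.

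The argument is elementary, and I expect the only delicate point to be Step~3: keeping the $(\cdot)^-$ and $(\cdot)^+$ operations straight once they are composed with the step function $L^n = \ell_\alpha$, and in particular establishing cleanly that $L^n_{(t+\eps)-} = L^n_t$ for small $\eps$, which ultimately rests on the non-accumulation of the renewal times (valid a.s.\ since $\pr_n(D_{\uparrow,u})=1$). An equivalent, more function-identity-flavoured route would instead first verify the formula $\bigl(\Phi\alpha\bigr)^- = \alpha^-\circ\ell_\alpha^-$ on $D_{\uparrow,u}$, then read off $\bigl(\Phi(B^n,S^n)\bigr)^- = (X^n,G^n)^-$ from $nL^n_{t-} = N^n_{t-}+1$ together with the shape of the left-hand limits of $(B^n,S^n)$, and finally apply $(\cdot)^+$, using that $(X^n,G^n)$ is rcll.
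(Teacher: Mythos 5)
Your proposal is correct and takes essentially the same approach as the paper: both rest on the identity $nL^n_t = N^n_t + 1$ for the generalized inverse $L^n=(S^n)^{-1}$ and then unwind Definition~\ref{def:Phi} to read off $(Y^n,D^n)$ and $(X^n,G^n)$. The only cosmetic difference is in your Step~3, where you evaluate $\Phi(B^n,S^n)(t)$ pointwise using the right-constancy of $L^n$ near $t$, whereas the paper first identifies the left limits via $(\Phi\alpha)^- = \alpha^-\circ\ell_\alpha^-$ and $nL^n_{t-}=N^n_{t-}+1$ and then takes right-hand limits — precisely the ``function-identity-flavoured'' alternative you sketch at the end.
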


We have prepared everything for the proof of the first main result:
\begin{theorem}
\label{th:CTRW-limit}
Define $(X,G) = \Phi(B,S)$ and $(Y,D) = \Psi(B,S)$. Then
\begin{align*}
  (X^n,G^n) &\Rightarrow (X,G), &(Y^n,D^n)&\Rightarrow (Y,D),
\end{align*}
where ``$\Rightarrow$'' denotes weak convergence in $(\mathbb D(\Rd\times\Rp),J)$ as $n\to\infty$.
\begin{proof}
By lemma \ref{lem:measurable}, $D_{\uparrow,u}$ is measurable, and we find that $\pr(D_{\uparrow,u})= \pr^n(D_{\uparrow,u}) = 1$ for all $n\in\N$.
Hence by \cite[ch.3, cor.3.2]{EtK86} the restrictions of $\pr^n$ to $D_{\uparrow,u}$ (endowed with the relative topology and Borel $\sigma$-field), converge weakly to the same restriction of $\pr$.
By proposition \ref{prop:PhiCont}, $D_{\upuparrows,u}$ is contained in the set of continuity points of $\Phi$ and $\Psi$,
and $\pr(D_{\upuparrows,u})=1$ by assumption (\ref{eq:assumption}).
Then Lemma~\ref{lem:matching} and the continuous mapping theorem (see e.g.\ \cite[p.30]{Bil99}) yield the statement.
\end{proof}
\end{theorem}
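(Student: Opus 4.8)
The plan is to exhibit $(X^n,G^n)$ and $(Y^n,D^n)$ as continuous images of the row-sum process $(B^n,S^n)$ and then to invoke the mapping theorem for weak convergence. All the ingredients are in place: the weak convergence $\pr^n\Rightarrow\pr$ on $(\mathbb D(\Rd\times\Rp),J)$, the path identities $\Phi\circ(B^n,S^n)=(X^n,G^n)$ and $\Psi\circ(B^n,S^n)=(Y^n,D^n)$ of Lemma~\ref{lem:matching}, the continuity of $\Phi$ and $\Psi$ at $D_{\upuparrows,u}$ from Proposition~\ref{prop:PhiCont}, and the Borel measurability of $D_{\uparrow,u}$ and $D_{\upuparrows,u}$ from Lemma~\ref{lem:measurable}. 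First I would restrict everything to the common domain $D_{\uparrow,u}$ of $\Phi$ and $\Psi$: every row-sum path has non-decreasing, unbounded second coordinate, so $\pr^n(D_{\uparrow,u})=1$, and since the limit $S$ is a subordinator (hence has non-decreasing paths and, tending to $+\infty$, unbounded ones) also $\pr(D_{\uparrow,u})=1$. A standard restriction result for weak convergence (e.g.\ \cite[ch.~3, cor.~3.2]{EtK86}) then gives $\pr^n|_{D_{\uparrow,u}}\Rightarrow\pr|_{D_{\uparrow,u}}$ on $D_{\uparrow,u}$ endowed with its subspace topology and Borel $\sigma$-field; separability of $D_{\uparrow,u}$, needed for the mapping theorem, is inherited from the Polish space $\mathbb D(\Rd\times\Rp)$.

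The second step is to confirm that the discontinuity sets of $\Phi$ and $\Psi$, as maps on $D_{\uparrow,u}$, are $\pr$-null. By Proposition~\ref{prop:PhiCont} these sets are contained in the measurable set $D_{\uparrow,u}\setminus D_{\upuparrows,u}$, so it suffices to show $\pr(D_{\upuparrows,u})=1$, i.e.\ that $S$ has a.s.\ strictly increasing paths. This is where assumption~(\ref{eq:assumption}) is used: by the \levy--Khintchine description of a subordinator, its paths fail to be strictly increasing exactly when it has zero drift and finite \levy\ measure, that is, when it is compound Poisson, a case excluded by~(\ref{eq:assumption}). Granting $\pr(D_{\upuparrows,u})=1$, the mapping theorem (\cite[p.~30]{Bil99}) applied to the restricted measures yields $\Phi_*\pr^n\Rightarrow\Phi_*\pr$ and $\Psi_*\pr^n\Rightarrow\Psi_*\pr$ in $(\mathbb D(\Rd\times\Rp),J)$.

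Finally I would read off the statement: by Lemma~\ref{lem:matching} the laws $\Phi_*\pr^n$ and $\Psi_*\pr^n$ are exactly those of $(X^n,G^n)$ and $(Y^n,D^n)$, while $\Phi_*\pr$ and $\Psi_*\pr$ are by definition the laws of $(X,G)=\Phi(B,S)$ and $(Y,D)=\Psi(B,S)$, which gives the two asserted weak limits. I expect the only real obstacle to be the step $\pr(D_{\upuparrows,u})=1$: the restriction and mapping-theorem bookkeeping is routine, but deducing from assumption~(\ref{eq:assumption}) that the limiting subordinator is a.s.\ strictly increasing — and, as a separate routine matter, unbounded — genuinely invokes the structure theory of subordinators and should be carried out with some care.
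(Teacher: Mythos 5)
Your proposal is correct and follows essentially the same route as the paper's proof: restrict the laws to $D_{\uparrow,u}$ via \cite[ch.3, cor.3.2]{EtK86}, use Proposition \ref{prop:PhiCont} together with assumption (\ref{eq:assumption}) to get $\pr(D_{\upuparrows,u})=1$, and conclude with Lemma \ref{lem:matching} and the continuous mapping theorem. The extra detail you supply — that a non--compound-Poisson subordinator (positive drift or infinite \levy\ measure) has a.s.\ strictly increasing, unbounded paths — is exactly what the paper leaves implicit in citing assumption (\ref{eq:assumption}).
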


\begin{remark}
Silvestrov and Teugels have shown the weak $J$-convergence of $(Y^n,D^n)$ to $(Y,D)$ for $d=1$ in \citep[th~3.4]{SiTe04Ltm}, using a compactness approach.
\end{remark}


\begin{remark}
In our notation, Becker-Kern et.\ al have shown \citep[th~3.1]{BeMS04Ltca} the weak convergence
\begin{align*}
  X^n \Rightarrow Y ~~~~\text{ in } \mathbb D(\Rd)
\end{align*}
with respect to the weaker $M_1$-topology under the assumption
\begin{align*}
 \text{disc}(B) \cap \text{disc}(S) = \emptyset ~~~~ \text{a.s.},
\end{align*}
where $\text{disc}(B)$ and $\text{disc}(S)$ denote the (random) sets of discontinuities of $B$ and $S$, respectively. This assumption is typically only satisfied by \emph{un}coupled CTRW limits.
The following observation explains why theorem \ref{th:CTRW-limit} does not contradict their limit theorem:
\end{remark}

\begin{lemma} \label{lem:X=Y}
Let $(\beta,\sigma) \in D_{\upuparrows,u}$ and write $\Phi(\beta,\sigma) = (\xi,\iota)$, $\Psi(\beta,\sigma) = (\eta,\upsilon)$.
If $\text{disc}(\beta) \cap \text{disc}(\sigma) = \emptyset$, then $\xi = \eta$.
\begin{proof}
Let $\ell = \sigma^{-1}$, and see that $\ell$ is continuous since $\sigma$ is increasing.
Then by definition of $\Phi$ and $\Psi$ we have $\xi = (\beta^-\circ \ell)^+$ and $\eta = \beta\circ\ell $.
Both paths start at the same point $\beta(0)\in\Rd$, hence it suffices to show that their left limits coincide at every $t>0$.
We find
\begin{align*}
  \eta(t-) =
\begin{cases}
\beta^-(\ell(t)) & \text{ if $\ell$ is left-increasing at $t$}
\\ \beta(\ell(t)) &\text{ else }
\end{cases}
\end{align*}
where ``left-increasing at $t$'' means $s<t \Rightarrow \ell(s) <\ell(t)$.
In the second case, $\ell(t) \in \text{disc}(\sigma)$, hence $\ell(t) \notin \text{disc}(\beta)$ by assumption, so that $\beta(\ell(t)) = \beta^-(\ell(t))$. Thus we have shown
$\eta^- = \beta^-\circ\ell = ((\beta^-\circ \ell)^+)^- = \xi^-$.
\end{proof}
\end{lemma}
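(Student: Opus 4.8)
The plan is to reduce the path identity $\xi=\eta$ in $\mathbb D(\Rd)$ to an equality of left-limit versions, which is precisely where the disjointness of the discontinuity sets enters. First I would unwind Definition~\ref{def:Phi}: since $\sigma$ is increasing and unbounded, $\ell:=\sigma^{-1}$ is continuous and nondecreasing, so $\ell^-=\ell$, and hence $\eta=\beta\circ\ell$ while $\xi=(\beta^-\circ\ell)^+$, both rcll, with $\xi(0)=\eta(0)=\beta(\ell(0))=\beta(0)$. Because an rcll path is determined by its value at $0$ together with its lcrl version, it then suffices to prove $\xi(t-)=\eta(t-)$ for every $t>0$.

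Fix $t>0$. For $\xi$, note that $\beta^-\circ\ell$ is left-continuous (as $\beta^-$ is left-continuous and $\ell$ is continuous and nondecreasing), so taking its rcll version and then a left limit returns the original value: $\xi(t-)=(\beta^-\circ\ell)^+(t-)=(\beta^-\circ\ell)(t)=\beta^-(\ell(t))$. For $\eta$, continuity and monotonicity of $\ell$ give $\ell(s)\uparrow\ell(t)$ as $s\uparrow t$, so $\eta(t-)=\lim_{s\uparrow t}\beta(\ell(s))$. If $\ell$ is \emph{left-increasing} at $t$, meaning $\ell(s)<\ell(t)$ for all $s<t$, then $\ell(s)$ increases strictly to $\ell(t)$, so $\beta(\ell(s))\to\beta^-(\ell(t))$ and $\eta(t-)=\beta^-(\ell(t))=\xi(t-)$.

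Otherwise $\ell$ is constant, equal to $c:=\ell(t)$, on some interval $[t_0,t]$ with $t_0<t$, and then $\eta(t-)=\beta(c)$, so it remains to show $\beta(c)=\beta^-(c)$. This is immediate if $c=0$; if $c>0$, the key step — the one I expect to require some care — is to show $c\in\text{disc}(\sigma)$. From $\ell(t_0)=c$ and the definition of the generalized inverse, $\sigma(u)\le t_0$ for every $u<c$, so $\sigma(c-)\le t_0<t$; from $\ell(t)=c$, $\sigma(u)>t$ for every $u>c$, so $\sigma(c)=\sigma(c+)\ge t$; hence $\sigma(c-)<\sigma(c)$. The hypothesis $\text{disc}(\beta)\cap\text{disc}(\sigma)=\emptyset$ then forces $c\notin\text{disc}(\beta)$, i.e.\ $\beta(c)=\beta^-(c)$, whence $\eta(t-)=\beta(c)=\beta^-(\ell(t))=\xi(t-)$. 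Combining the two cases gives $\xi(t-)=\eta(t-)$ for all $t>0$, and with $\xi(0)=\eta(0)$ this yields $\xi=\eta$.
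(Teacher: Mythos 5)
Your proof is correct and follows essentially the same route as the paper's: reduce $\xi=\eta$ to equality of the lcrl versions $\xi^-=\eta^-=\beta^-\circ\ell$, treat separately the cases where $\ell$ is left-increasing or left-constant at $t$, and in the latter case show $\ell(t)\in\text{disc}(\sigma)$ so that the hypothesis forces $\beta(\ell(t))=\beta^-(\ell(t))$. The only difference is that you spell out the verification that $\sigma$ jumps at $\ell(t)$ (and the harmless $c=0$ case), which the paper leaves implicit.
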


\begin{remark}
Under the assumption that $B$ and $S$ are independent, the process $B\circ L^-$ appears as a CTRW limit in \cite{BeMS04Ltc}. The interesting generalization there is that $S$ is \emph{not} assumed strictly increasing. Convergence has only been shown for all finite dimensional distributions, and it is an interesting open question whether or not the rcll versions convergence weakly on $\mathbb D(\Rd)$. The continuous mapping theorem does not give a simple answer in this case, as the mapping $\Phi$ is not continuous on $D_{u,\uparrow}$, even if the uniform convergence topology is chosen on the domain and the $M_1$-topology is chosen on the codomain. To see this, let $(\beta_n, \sigma_n)$ and $(\beta,\sigma)$ be given by
\begin{align*}
\beta_n(t) &= \beta(t), & \beta(t) &= \begin{cases}
             1, & 1 \leq t < 2
\\ 0, & \text{else}
            \end{cases}
,
\\
\sigma_n(t) &=
\begin{cases}
 2-1/n, & 0\leq t <1
\\ 2+1/n, & 1\leq t < 2 + 1/n
\\ t, & 2+1/n\leq t
\end{cases}
, &
\sigma(t) &=
\begin{cases}
2, & 0\leq t < 2
\\ t, & 2\leq t
\end{cases}.
\end{align*}
Then $(\beta_n,\sigma_n) \to (\beta,\sigma)$ with the (strongest) uniform topology, and $\beta\circ\sigma^{-1}\equiv 0$, but
$\beta_n\circ \sigma_n^{-1}(2) = 1$ for all $n$.
\end{remark}

\section{CTRW Limit Laws}

\begin{definition}
The $\Rp$-valued processes $A$ and $R$ given by
\begin{align*}
  A_t &= t - G_t, & R_t = D_t - t
\end{align*}
are called \textbf{age} and \textbf{remaining lifetime}, respectively.
The random set
\begin{align*}
 \mathbf M = \{(t,\omega)\in\Rp\times\Omega: t = S_u(\omega) \text{ for some } u\geq 0\}.
\end{align*}
is called \textbf{regenerative set}.
Its $\omega$-slices and $t$-slices are the sets
\begin{align*}
\mathbf M^\omega &= \{t\in\Rp: (t,\omega) \in \mathbf M\},
& \mathbf M_t &= \{\omega\in\Omega: (t,\omega) \in \mathbf M\}.
\end{align*}
\end{definition}

Note that for $\pr$-almost every $\omega$,
\begin{align*}
  G_t(\omega) &= \sup\left([0,t]\cap\mathbf M^\omega\right), & D_t(\omega) &= \inf([t,\infty)\cap \mathbf M^\omega).
\end{align*}
The term ``regenerative'' refers to the property that the part of $\mathbf M$ to the right of any $D_t(\omega) \in \mathbf M^\omega$ has the same distribution as $\mathbf M$. This can be easily inferred from the strong Markov property of $S$; for details see e.g.\ \cite{Bert97SEa}.

The set  $\mathbf M^\omega$ can be interpreted as the range of the path $S_\cdot(\omega)$.
Almost every such path is rcll and has countably many jumps, hence the complement $(\mathbf M^\omega)^\complement$ can be written as a countable union of intervals of the form $[g_i,d_i)$; such intervals are commonly termed \textbf{contiguous to $\mathbf M$}.
The collection of all such $g_i$ (resp.\ $d_i$) defines the set $\mathbf G^\omega$ (resp.\ $\mathbf D^\omega$), which in turn defines a random set $\mathbf G$ (resp.\ $\mathbf D$).
In what follows, if we apply a topological operation (e.g.\ ``complement'', ``union'' or ``closure'') to a random set $\subset \Rp\times\Omega$, then we mean the application of this operation to all $\omega$-slices $\subset \Rp$; for instance $\overline {\mathbf M} = \mathbf M \cup \mathbf G$.
\begin{lemma}
Fix $t\geq 0$.
\label{lem:fixed-disc}
\begin{enumerate}[i.]
\item
\label{it:lem4}
On $\mathbf M$ we have $Y = X$ and $R = A \equiv 0$.
\item
\label{it:lem5}
$\pr(\mathbf G_t) = \pr(\mathbf D_t) = 0$.
\item
\label{it:lem1}
$\pr(\{\omega:L_t(\omega) \in \text{disc}(B,S)^\omega \setminus \text{disc}(S)^\omega\}) = 0$.
 \item
\label{it:lem3}
$ \pr\left((X,A)^-_t = (X,A)_t\right) = 1$, i.e.\ $(X,A)$ admits no fixed discontinuities.
\item
\label{it:lem2}
$(Y_t,D_t) - (X_t,G_t) = \Delta(B,S)_{L_t}$ almost surely.
\end{enumerate}
\begin{proof}
\ref{it:lem4}.\
For almost all $\omega\in\Omega$, every point in $\mathbf M^\omega$ is a right-limit point. Hence $L_{\cdot}(\omega)$ is right-increasing at $t\in \mathbf M^\omega$, i.e.\ $t<u \Rightarrow L_t<L_u$, and thus
$(X_t,G_t) = (B,S)^-(L_{t+}) = (B,S)(L_t) = (Y_t,D_t)$. As, by definition, $G_t \leq t \leq D_t$, we have $G_t=t=D_t$, i.e.\ $A_t = R_t = 0$.

\ref{it:lem5}.\ $\pr(\mathbf G_t)=0$ has been shown in \cite[proof of prop.1.9~(ii)]{Bert97SEa}.
Using a similar argument based on the compensation formula, we find that
\begin{align*}
\pr(\mathbf D_t) = \pr(\exists u\geq 0: S^-_u<t, \Delta S_u = t-S^-_u) = \int_{[0,t)}  \Pi(\{t-y\})U(dy),
\end{align*}
where $U$ and $\Pi$ denote the $0$-potential (or ``renewal measure'') and the \levy-measure of the subordinator $S$, respectively. Since the integrand equals zero except at at most countably many points and since $U$ has no atoms (\cite[p.10]{Bert97SEa}), the integral vanishes.

\ref{it:lem1}.\ We abbreviate
$\mathbf K := {\rm disc}(B,S)\setminus{\rm disc}(S)$. For each $u\geq 0$, $\mathbf K_u$ is measurable with respect to the $\sigma$-field
$\sigma(Z_v:0\leq v\leq u)$ generated by the $\Rd$-valued process $Z$:
\begin{align*}
Z_t(\omega) := \sum_{0\leq u \leq t} \mathbf 1\left\lbrace\Delta(B,S)_u(\omega)\in\Rd\times\{0\}\right\rbrace \Delta B_u(\omega).
\end{align*}
By the Poisson process nature of the jumps of $(B,S)$, the
processes $Z$ and $(B,S)-(Z,0)$ are independent.
In particular, $\mathbf K$ is independent of $S$ and hence of $L$. This yields
\begin{align*}
&\pr(\{\omega: L_t(\omega) \in \mathbf K^\omega\}) = \E[\pr(\{\omega: L_t(\omega) \in \mathbf K^\omega\}|L_t)]
\\ &= \int_0^\infty \pr(L_t\in du) \pr(\{\omega: u \in \mathbf K^\omega\}|L_t = u)
= \int_0^\infty \pr(L_t\in du)\pr(\mathbf K_u).
\end{align*}
Since the \levy\ process $Z$ has no fixed discontinuities, $\pr(\mathbf K_u) = 0$ for all $u\geq 0$, and the integral vanishes.

\ref{it:lem3}.\
Observe that if $\omega\in\Omega$ is fixed then $L$ is constant on every open interval which is contiguous to  $\overline{\mathbf M}$. Hence the same is true for $(X,G)$, and thus $\text{disc}(X,A) \cap \overline{\mathbf M}^\complement = \emptyset$. Thus we established
\begin{align*}
 \text{disc}(X,A) \subset \overline {\mathbf M} = \mathbf G \cup (\mathbf M\setminus \mathbf D) \cup \mathbf D,
\end{align*}
and according to part \ref{it:lem5}, it now suffices to show
$\pr((\text{disc}(X,A) \cap (\mathbf M\setminus \mathbf D))_t)= 0$. Suppose $(t,\omega) \in \mathbf M\setminus \mathbf D$. Then $t$ is both left- and right-limit point, and one consequence is that $G$ and thus $A$ are continuous at $t$.  Another consequence is that $L_{\cdot}(\omega)$ is both left- and right-increasing at $t$, i.e.\ $s<t<u \Rightarrow L_s<L_t<L_u$. Since moreover $L$ is continuous, and since $G^- = S^-\circ L$ is continuous at $t$, $S$ must be continuous at $L_t$. Item \ref{it:lem1} then implies that $B$ is continuous at $L_t$ a.s., and hence $X^-=B^-\circ L$ and $X$ are both continuous at $t\in\mathbf M\setminus \mathbf D$ a.s..

\ref{it:lem2}.\
By definition, $(Y_t,D_t) - (X^-_t,G^-_t) = \Delta(B,S)_{L_t}$, so the statement follows from item~\ref{it:lem3} and $A_t = t-G_t$.
\end{proof}
\end{lemma}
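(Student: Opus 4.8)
The plan is to prove the five items in the order given, since each leans on its predecessors.

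For \ref{it:lem4}, I would first record that under assumption~\eqref{eq:assumption} the subordinator $S$ is almost surely strictly increasing, so that for $\pr$-a.e.\ $\omega$ every point of the range $\mathbf M^\omega$ is a right-limit point of $\mathbf M^\omega$; equivalently $L := S^{-1}$ is right-increasing at each $t\in\mathbf M^\omega$. Evaluating $\Phi$ and $\Psi$ at such a $t$ then makes the ``left approach'' built into $\Phi$ collapse, so $(X_t,G_t)=(B,S)^-(L_{t+})=(B,S)(L_t)=(Y_t,D_t)$; and $A_t=R_t=0$ follows from $G_t\le t\le D_t$ together with $G_t=\sup([0,t]\cap\mathbf M^\omega)=t=\inf([t,\infty)\cap\mathbf M^\omega)=D_t$. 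For \ref{it:lem5}, $\pr(\mathbf G_t)=0$ is the classical fact for regenerative sets, and $\pr(\mathbf D_t)=0$ I would obtain by observing that $\{t\in\mathbf D^\omega\}$ is the event $\{\exists u\colon S^-_u<t,\ \Delta S_u=t-S^-_u\}$ that a jump of $S$ lands exactly at $t$, then applying the compensation formula for the jump measure of $S$ to rewrite $\pr(\mathbf D_t)=\int_{[0,t)}\Pi(\{t-y\})\,U(dy)$ with $\Pi$ the \levy\ measure and $U$ the renewal measure; this vanishes since $y\mapsto\Pi(\{t-y\})$ is supported on a countable set while $U$ is atomless.

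For \ref{it:lem1}, the key is an independence argument coming from the \levy--It\^o decomposition: the jumps of $(B,S)$ whose $S$-coordinate is $0$ form a Poisson random measure independent of the part of $(B,S)$ that carries the motion of $S$, so the $\Rd$-valued pure-jump \levy\ process $Z$ collecting exactly those jumps is independent of $S$, hence of $L$. Writing $\mathbf K=\text{disc}(B,S)\setminus\text{disc}(S)$, one has $\mathbf K_u=\{\Delta Z_u\neq 0\}$, and conditioning on the value of $L_t$ gives $\pr(L_t\in\mathbf K^\omega)=\int_0^\infty\pr(L_t\in du)\,\pr(\mathbf K_u)=0$, because the \levy\ process $Z$ has no fixed times of discontinuity.

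Item \ref{it:lem3} is where the real work lies, and I expect it to be the main obstacle. Since $L$ --- and therefore $(X,G)$, and $(X,A)$ since $t\mapsto t$ is continuous --- is constant on every interval contiguous to $\overline{\mathbf M}$, we have $\text{disc}(X,A)\subset\overline{\mathbf M}=\mathbf G\cup(\mathbf M\setminus\mathbf D)\cup\mathbf D$, so by \ref{it:lem5} it remains to show $\pr\big((\text{disc}(X,A)\cap(\mathbf M\setminus\mathbf D))_t\big)=0$. On $\mathbf M\setminus\mathbf D$ the time $t$ is simultaneously a left- and a right-limit point of $\mathbf M^\omega$; this forces $G$ (hence $A$) to be continuous at $t$ and forces $L$ to be continuous and strictly increasing through $L_t$, so continuity of $G^-=S^-\circ L$ at $t$ yields continuity of $S$ at $L_t$, and \ref{it:lem1} upgrades this to a.s.\ continuity of $B$ at $L_t$, hence of $X^-=B^-\circ L$ and $X$ at $t$. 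The delicate points are the path-regularity bookkeeping (extracting continuity of $G$ and strict monotonicity/continuity of $L$ from the two-sided limit-point property of $\mathbf M^\omega$ at $t$) and keeping the several almost-sure exceptional sets from \ref{it:lem5} and \ref{it:lem1} under control across the decomposition of $\overline{\mathbf M}$. Finally \ref{it:lem2} is immediate: by construction $(Y_t,D_t)=(B,S)(L_t)$ and $(X^-_t,G^-_t)=(B,S)^-(L_t)$, so their difference is $\Delta(B,S)_{L_t}$, and replacing $(X^-_t,G^-_t)$ by $(X_t,G_t)$ is legitimate almost surely by \ref{it:lem3}, while $A_t=t-G_t$ gives the stated identity.
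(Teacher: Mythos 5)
Your proposal is correct and follows essentially the same route as the paper's proof: the right-limit-point property of $\mathbf M^\omega$ for item i, the compensation formula with atomless renewal measure for item ii, independence of the zero-$S$-jump process $Z$ from $L$ for item iii, the decomposition $\overline{\mathbf M}=\mathbf G\cup(\mathbf M\setminus\mathbf D)\cup\mathbf D$ for item iv, and the identity $(Y_t,D_t)-(X^-_t,G^-_t)=\Delta(B,S)_{L_t}$ combined with item iv for item v. No gaps worth flagging.
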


\paragraph{\levy-characteristics}

For a bounded Borel measure $\nu$ on $\Rd\times\Rp$ we define its \textbf{Fourier-Laplace transform} (FLT) as the map
\begin{align*}
\Rd\times\Rp\ni(k,s)\mapsto \int_{\Rd\times\Rp} \exp(i \langle x,k \rangle - st) \nu(dx,dt) \in \mathbb C,
\end{align*}
with $\langle\cdot,\cdot\rangle$ denoting the inner product in $\Rd$.
Lemma 2.1 in \cite{BeMS04Ltca}, which is derived from a general theorem on integral transforms on semigroups in \cite{Ress91Sip}, states that for $t\geq 0$ the FLT of the law of $(B_t,S_t)$  is $\exp(-t\psi(k,s))$, where
\begin{align*}
 \psi(k,s) = i\langle b,k \rangle + \gamma s + \sigma^2(k) +
 \int \left(1 - \exp\left(i\langle x,k \rangle - st \right) + \dfrac{i\langle k,x \rangle}{1+\|x\|^2} \right) \Pi(dx,dt)
\end{align*}
for some $(b,\gamma) \in \Rd\times\Rp$, a quadratic form $\sigma^2$ on $\Rd$ and a Borel measure $\Pi$ on $\Rd\times\Rp \setminus\{(0,0)\}$ which assigns finite mass to sets bounded away from $(0,0)$ and satisfies
\begin{align*}
 \int_{0<t+\|x\|^2<1} (\|x\|^2 +t) \Pi(dx,dt) < \infty.
\end{align*}
The distribution of $(B_1,S_1)$ uniquely determines $b$, $\gamma$, $\sigma^2$ and $\Pi$, which are called \textbf{drift of $B$}, \textbf{drift of $S$}, \textbf{diffusivity of $B$} and \textbf{jump measure} of $(B,S)$, respectively.
We write $\overline \Pi(a) := \Pi(\Rd\times(a,\infty))$, $a\geq 0$, for the \textbf{tail function} of $\Pi$.
Note that assumption (\ref{eq:assumption}) is then equivalent to
``$ \overline \Pi(0) = \infty$ or $\gamma > 0$''. The \textbf{potential measure} $U$ of $(B,S)$ is the unique measure on $\Rd\times\Rp$ which satisfies
\begin{align*}
 \int f(b,s) U(db,ds) = \E\left[\int_0^\infty f((B,S)_t)dt\right],
~~~ f\in p\mathcal B(\Rd\times\Rp)
\end{align*}
where $p\mathcal B(\Rd\times\Rp)$ is the set of all real-valued $\mathcal B(\Rd\times\Rp)$-measurable functions which only attain non-negative values.

For fixed $t\geq0$, we want to find the joint law of $(X_t,A_t,Y_t,R_t)$.
As we have seen that $(X_t,A_t)$ does not admit any fixed discontinuities, this law follows easily from the joint law of $(X^-_t,A^-_t,Y_t,R_t) = (B^-_{L_t},t-S^-_{L_t},B_{L_t},S_{L_t}-t)$.
Observing that the \levy\ process $(B,S)$ can be viewed as a special case of a Markov additive process \cite{cC72Map}, we can find this law in \cite{cC76EdM} (or in \cite{Mais77Cdt}, where an easier proof is given); however, only for $t$ outside of some Lebesgue nullset. With a few additional assumptions on the regularity of $U$, we show that the formula in the latter reference holds true for \emph{all} $t\geq0$.

\begin{proposition}
\label{lem:R=0}
Suppose that $\gamma > 0$, that the laws of $(B_t,S_t)$ are Lebesgue-absolutely continuous for $t>0$, and that the Lebesgue density $u(\cdot,\cdot)$ of $U$ is continuous. Then
\begin{align*}
\pr(Y_t\in C, R_t = 0) = \gamma \int_C u(b,t)db, ~~~~ C\in \mathcal B(\Rd).
\end{align*}
\begin{proof}
Note that absolute continuity of the laws $(B_t,S_t)$ ($t>0$) implies absolute continuity of $U$ (\cite[41.13]{Sat99}).
The marginal $U(\Rd, \cdot)$ of $U$ is the potential of the subordinator $S$ and is usually called ``renewal measure''. Its Laplace-transform satisfies
\begin{align*}
\int_0^\infty e^{-s x}U(\Rd,dx) = \frac{1}{\psi(0,s)}
\sim \frac{1}{\gamma s}, ~~~ (s\to \infty).
\end{align*}
A Tauberian theorem then implies that
\begin{align}
\label{eq:tauber}
 U(\Rd\times[0,\eps]) \sim \eps/\gamma, ~~~ (\eps \to 0+).
\end{align}
For any $\delta >0$ define $B_\delta = \{x\in \Rd: \Vert x \Vert < \delta\}$. 
Then using $L_\eps \leq \eps / \gamma$ and the stochastic continuity of $B$ at $0$ we have
\begin{align*}
U(B_\delta^\complement \times[0,\eps])
= \E \left[\int_0^{L_\eps}\1\{B_t \geq \delta \}\right]
\leq \int_0^{\eps/\gamma} \pr (B_t \geq \delta) dt = o(\eps).
\end{align*}
Without loss of generality, let $C\subset \Rd$ be open; then (\ref{eq:tauber}) can be refined to 
\begin{align} \label{eq:tauber2}
 \lim_{\eps \downarrow 0}\frac{U(C\times[0,\eps])}{\eps} 
= \frac{\1\{0\in C\}}{\gamma}
\end{align}
if $0\notin \del C$. 
The continuity of $u$ and an application of the Markov property at the time $L_t$ yield
\begin{align*}
\int_{C} u(b,t) db &= \lim_{\eps\downarrow 0} \eps^{-1}U(C\times(t,t+\eps])
= \lim_{\eps\downarrow 0} \eps^{-1}\E\left[\int_{L_t}^\infty\1\{(B,S)_u\in C\times (t,t+\eps]\}du\right]
\\ 
&=\lim_{\eps\downarrow 0}\eps^{-1}\int_{\Rd\times[t,t+\eps]}\pr((B,S)_{L_t}\in (db,ds)) 
\left(U((C-b)\times[0,t+\eps-s)\right).
\end{align*}
The last displayed integral, if instead taken over the set $\Rd\times(t,t+\eps]$, is bounded by
\begin{align*}
\pr(S_{L_t}\in (t,t+\eps]) U(\Rd\times[0,\eps)), 
\end{align*}
which is of order $o(\eps)$ by (\ref{eq:tauber}).
Hence, by dominated convergence and (\ref{eq:tauber2}),
\begin{align*}
\int_C u(b,t)db = \int_{\Rd\times\{t\}}\pr((B,S)_{L_t}\in (db,ds))
\gamma^{-1}\1\{b\in C\} = \gamma^{-1}\pr(Y_t \in C, D_t = t),
\end{align*}
where we have assumed without loss of generality that the boundary of $C$ has Lebesgue measure $0$.
\end{proof}
\end{proposition}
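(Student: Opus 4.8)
The plan is to establish the identity $\pr(Y_t \in C, R_t = 0) = \gamma \int_C u(b,t)\,db$ by relating the left-hand side to a limit of normalized potential masses of thin slabs $\Rd \times (t, t+\eps]$, and then evaluating that limit using a Tauberian estimate for the renewal measure of $S$ together with continuity of $u$. First I would record the consequences of the hypotheses: absolute continuity of the laws of $(B_t,S_t)$ forces $U$ to be absolutely continuous (via \cite[41.13]{Sat99}), so the density $u$ exists, and it is assumed continuous. The drift assumption $\gamma > 0$ gives the deterministic lower bound $L_\eps \le \eps/\gamma$ on the inverse subordinator, which is the crucial quantitative input: it says that in ``time'' at most $\eps/\gamma$ the process $S$ has already climbed past height $\eps$.

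Next I would derive the Tauberian asymptotics. The Laplace transform of the renewal measure $U(\Rd,\cdot)$ is $1/\psi(0,s)$, which behaves like $1/(\gamma s)$ as $s\to\infty$ because the linear drift term dominates; the Tauberian theorem for Laplace transforms then yields $U(\Rd\times[0,\eps]) \sim \eps/\gamma$ as $\eps\downarrow 0$. I would then upgrade this to a localized statement: for an open set $C$ with $0\notin\partial C$, one has $\eps^{-1}U(C\times[0,\eps]) \to \gamma^{-1}\1\{0\in C\}$. The point is that the potential mass in the slab $\Rd\times[0,\eps]$ concentrates near the spatial origin as $\eps\downarrow 0$; to see this, use $L_\eps\le\eps/\gamma$ and the stochastic continuity of $B$ at $0$ to bound $U(B_\delta^\complement\times[0,\eps]) \le \int_0^{\eps/\gamma}\pr(\|B_t\|\ge\delta)\,dt = o(\eps)$ for each fixed $\delta>0$, then combine with the full-mass asymptotics.

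With these estimates in hand, the core computation writes $\int_C u(b,t)\,db = \lim_{\eps\downarrow 0}\eps^{-1}U(C\times(t,t+\eps])$ using continuity of $u$, then applies the Markov property of $(B,S)$ at the time $L_t$ to split the potential mass of $C\times(t,t+\eps]$ over the post-$L_t$ path as $\int \pr((B,S)_{L_t}\in(db,ds))\,U((C-b)\times[0,t+\eps-s))$, where the inner integral is effectively over $s\in[t,t+\eps]$. The contribution from $s\in(t,t+\eps]$ is dominated by $\pr(S_{L_t}\in(t,t+\eps])\cdot U(\Rd\times[0,\eps)) = o(\eps)$; so only the atom $s=t$ survives, i.e.\ the event $S_{L_t}=t$, which is exactly $\{D_t=t\}=\{R_t=0\}$, and on it $b=B_{L_t}=Y_t$. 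Dominated convergence together with the localized Tauberian limit $\eps^{-1}U((C-b)\times[0,\eps))\to\gamma^{-1}\1\{0\in C-b\}=\gamma^{-1}\1\{b\in C\}$ then gives $\int_C u(b,t)\,db = \gamma^{-1}\pr(Y_t\in C, D_t=t)$, which rearranges to the claim; the reduction from general Borel $C$ to open $C$ with null boundary is standard.

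The main obstacle is the uniform control needed to exchange the limit $\eps\downarrow 0$ with the integral over $\pr((B,S)_{L_t}\in(db,ds))$: one must simultaneously handle the small-$\eps$ asymptotics of the inner potential term, argue that the mass of the offending region $\{s>t\}$ is genuinely $o(\eps)$ rather than merely $O(\eps)$ (this is where $L_\eps\le\eps/\gamma$ and stochastic continuity of $S$ are both invoked), and ensure the localized Tauberian convergence holds with the spatial shift $b$ not disturbing the limit — which is why $0\notin\partial(C-b)$ for a.e.\ $b$ is needed and why $C$ should be taken with boundary of Lebesgue measure zero. Getting a clean dominating function for dominated convergence, uniformly in the small parameter $\eps$, is the delicate point; everything else is bookkeeping with the Markov property and the renewal measure.
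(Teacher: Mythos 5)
Your proposal follows the paper's own argument essentially step for step: the Tauberian asymptotics $U(\Rd\times[0,\eps])\sim\eps/\gamma$, the localization via $L_\eps\le\eps/\gamma$ and stochastic continuity of $B$, the Markov property applied at $L_t$ with the $o(\eps)$ bound on the contribution from $s\in(t,t+\eps]$, and dominated convergence to isolate the atom at $s=t$, identifying $\{S_{L_t}=t\}=\{R_t=0\}$ and $B_{L_t}=Y_t$. It is correct and matches the paper's proof, so no further comparison is needed.
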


\begin{proposition}
\label{prop:R>0}
Let $f\in B_+\left(\Rd\times\Rp\times\Rd\times\Rp\right)$. Then
\begin{align}\label{eq:prop:R>0}
\begin{split}
&\E[f(X_t,A_t,Y_t,R_t)\mathbf 1\{R_t > 0\}] =
\\&= \int_{\Rd\times[0,t]}U(dy,ds)
\int_{\Rd\times[t-s,\infty)}\Pi(d\xi,d\eta) f(y,t-s,y+\xi,s+\eta-t)
\end{split}
\end{align}
\begin{proof}
An application of the compensation formula \cite[prop.~XII.1.10]{ReY99} yields
\begin{align}
\label{eq:comp-formula}
 \E\left[\sum_{s>0}F\left((B,S)^-_s, \Delta(B,S)_s\right)\right]
= \E\left[\int\limits_0^\infty ds \int F\left((B,S)^-_s,(\xi,\eta)\right)\Pi(d\xi,d\eta)\right],
\end{align}
valid for all $F\in p\mathcal B(\Rd\times\Rp\times\Rd\times\Rp)$ such that $F(\cdot, \cdot, 0,0) = 0$.
Letting
\begin{align*}
 F\left((x,z),(\xi,\eta)\right) = f(x,t-z,x+\xi,z+\eta-t)\mathbf 1\{z\leq t, z+\eta > t\},
\end{align*}
we find that $s=L_t$ yields the only possibly non-zero summand.
After an application of lemma \ref{lem:fixed-disc}(\ref{it:lem3}) we can match the left sides of (\ref{eq:prop:R>0}) and (\ref{eq:comp-formula}).
The \levy\ process $(B,S)$ has no fixed discontinuities, and a short calculation shows that the right sides are equal as well.
\end{proof}
\end{proposition}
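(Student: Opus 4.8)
The plan is to obtain (\ref{eq:prop:R>0}) from the compensation formula (\ref{eq:comp-formula}) applied to the non-negative measurable functional
\[
 F\big((x,z),(\xi,\eta)\big) := f\big(x,\,t-z,\,x+\xi,\,z+\eta-t\big)\,\mathbf 1\{z\le t,\ z+\eta>t\},
\]
which vanishes whenever $(\xi,\eta)=(0,0)$, so that (\ref{eq:comp-formula}) applies. First I would match the two left-hand sides. Assumption (\ref{eq:assumption}) forces the subordinator $S$ to be a.s.\ strictly increasing, so $L=S^{-1}$ is continuous, and from Definition~\ref{def:Phi} one reads $(X^-_t,G^-_t)=(B^-_{L_t},S^-_{L_t})$ and $(Y_t,D_t)=(B_{L_t},S_{L_t})$. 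The key combinatorial observation is that the sum $\sum_{s>0}F\big((B,S)^-_s,\Delta(B,S)_s\big)$ has at most one non-zero term: the indicator requires $S^-_s\le t<S^-_s+\Delta S_s=S_s$, and, $S$ being strictly increasing, the only index for which this is possible is $s=L_t$, where it occurs precisely when $R_t>0$; when $R_t=0$ we have $S_{L_t}=D_t=t$ and that term vanishes as well. Hence the left side of (\ref{eq:comp-formula}) equals $F\big((B,S)^-_{L_t},\Delta(B,S)_{L_t}\big)=f(X^-_t,A^-_t,Y_t,R_t)\,\mathbf 1\{R_t>0\}$ almost surely, and lemma~\ref{lem:fixed-disc}(\ref{it:lem3}) --- the absence of fixed discontinuities of $(X,A)$ --- allows me to replace $(X^-_t,A^-_t)$ by $(X_t,A_t)$, producing the left side of (\ref{eq:prop:R>0}).

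Next I would evaluate the right side of (\ref{eq:comp-formula}). By Tonelli's theorem (all integrands are non-negative) I first integrate against $\Pi(d\xi,d\eta)$ and then use that, for a non-negative measurable $h$, $\E[\int_0^\infty h((B,S)^-_s)\,ds]=\int h\,dU$: this holds because $(B,S)$, being a \levy\ process, has no fixed discontinuities, so $(B,S)^-_s=(B,S)_s$ almost surely for each fixed $s$, after which the identity is just the definition of the potential measure $U$. This rewrites the right side of (\ref{eq:comp-formula}) as
\[
 \int_{\Rd\times\Rp}U(dy,dz)\int_{\Rd\times\Rp}\Pi(d\xi,d\eta)\,f(y,t-z,y+\xi,z+\eta-t)\,\mathbf 1\{z\le t,\ z+\eta>t\},
\]
and the indicator confines $z$ to $[0,t]$ and $\eta$ to $(t-z,\infty)$. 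Since the $\Rp$-marginal of $U$ is the renewal measure of $S$, which has no atoms, the sets $\{z=t\}$ and $\{z+\eta=t\}$ are $U\otimes\Pi$-null, so these ranges may equally be written $[0,t]$ and $[t-z,\infty)$; this is exactly (\ref{eq:prop:R>0}).

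The main obstacle is the first step: one has to verify rigorously that the jump of $(B,S)$ at $L_t$ is the unique contributor to the sum and that its contribution is precisely $f(X_t,A_t,Y_t,R_t)\mathbf 1\{R_t>0\}$. This rests on a careful treatment of the degenerate configurations at the fixed time $t$ (the cases $t\in\mathbf M^\omega$, where the relevant term disappears, and $t\in\mathbf G^\omega$) together with the $\pr$-almost-sure identities in lemma~\ref{lem:fixed-disc}, in particular parts (\ref{it:lem5}) and (\ref{it:lem3}).
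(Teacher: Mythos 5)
Your proposal is correct and follows essentially the same route as the paper: the compensation formula applied to the same functional $F$, the observation that $s=L_t$ gives the unique non-zero summand (equal to $f(X^-_t,A^-_t,Y_t,R_t)\mathbf 1\{R_t>0\}$), lemma~\ref{lem:fixed-disc}(\ref{it:lem3}) to pass to $(X_t,A_t)$, and the potential-measure identity plus absence of fixed discontinuities of $(B,S)$ for the right-hand side. Your explicit handling of the boundary sets $\{z=t\}$ and $\{z+\eta=t\}$ simply spells out the ``short calculation'' the paper leaves implicit.
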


We are now ready to give our second main result:

\begin{theorem}
\label{th:munu}
For $t\geq 0$ and $f\in p\mathcal B(\Rd\times\Rp\times\Rd\times\Rp)$, we have
\begin{align} \label{eq:XAYR}
\begin{split}
\E[f(X_t,A_t,Y_t,R_t)] &= 
\gamma \int f(y,0,y,0) u(dy,t) 
\\&+  \int\limits_{\Rd\times[0,t]}U(dy,ds)
      \int\limits_{\Rd\times[t-s,\infty)} \Pi(d\xi,d\eta)f(y,t-s,y+\xi,s+\eta-t). 
\end{split}
\end{align}
\begin{proof}
Observe that $\{(t,\omega): R_t(\omega) = 0\} = \mathbf M$. Combining lemma~\ref{lem:fixed-disc}(\ref{it:lem4}), lemma~\ref{lem:R=0} and proposition~\ref{prop:R>0} then yields the above formulae.
\end{proof}
\end{theorem}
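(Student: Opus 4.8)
The plan is to decompose the expectation $\E[f(X_t,A_t,Y_t,R_t)]$ according to whether $R_t = 0$ or $R_t > 0$, since these two cases correspond exactly to the partition $\Omega = \mathbf M_t \cup \mathbf M_t^\complement$ (that is, $\{R_t = 0\} = \mathbf M_t$). First I would write
\begin{align*}
\E[f(X_t,A_t,Y_t,R_t)] = \E[f(X_t,A_t,Y_t,R_t)\mathbf 1\{R_t = 0\}] + \E[f(X_t,A_t,Y_t,R_t)\mathbf 1\{R_t > 0\}],
\end{align*}
and handle the two terms separately.

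For the second term I would simply invoke Proposition~\ref{prop:R>0}, which gives precisely the double-integral expression appearing as the second line of~(\ref{eq:XAYR}); there is nothing further to do there. For the first term, I would use Lemma~\ref{lem:fixed-disc}(\ref{it:lem4}): on $\mathbf M$ we have $X_t = Y_t$ and $A_t = R_t = 0$, so that $f(X_t,A_t,Y_t,R_t)\mathbf 1\{R_t = 0\} = f(Y_t,0,Y_t,0)\mathbf 1\{R_t = 0\}$. Since $R_t = 0$ is the same event as $D_t = t$, Proposition~\ref{lem:R=0} (applied with $C$ ranging over Borel sets, then extended to general nonnegative measurable integrands by the usual monotone-class/linearity argument) identifies the law of $Y_t$ restricted to this event as $\gamma\, u(\cdot,t)\,db$; hence $\E[f(Y_t,0,Y_t,0)\mathbf 1\{R_t=0\}] = \gamma\int f(y,0,y,0)\,u(y,t)\,db$, which is the first line of~(\ref{eq:XAYR}). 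Adding the two pieces yields the claimed identity.

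The only genuine subtlety — and the step I would flag as the main obstacle — is the passage from the set-indicator form of Proposition~\ref{lem:R=0} to a statement about $\E[g(Y_t)\mathbf 1\{R_t=0\}]$ for arbitrary $g\in p\mathcal B(\Rd)$, and then the observation that for $f$ of the product-in-disguise form $f(y,0,y,0)$ this is exactly what is needed. This is a routine measure-theoretic extension (indicators $\to$ simple functions $\to$ monotone limits), but it relies on the continuity and, in particular, the $\sigma$-finiteness/local finiteness of $b\mapsto u(b,t)$ guaranteed by the hypotheses of Proposition~\ref{lem:R=0}, so one should note that those hypotheses ($\gamma>0$, absolute continuity of the laws of $(B_t,S_t)$, continuity of $u$) are in force here as standing assumptions. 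I would also remark that the two contributions are automatically disjoint in support (one lives on $\{R_t=0\}$, the other on $\{R_t>0\}$), so no inclusion–exclusion correction is needed and the sum is clean.

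Everything else is bookkeeping: matching the notation $u(dy,t) = u(y,t)\,dy$ between the statement and Proposition~\ref{lem:R=0}, and noting that $X_t = Y_t$ and $A_t = t - G_t = 0$ on $\mathbf M_t$ so that the four arguments of $f$ collapse to $(y,0,y,0)$ as written. Thus the proof is essentially a two-line combination of the three preceding results, exactly as indicated in the paper's own one-sentence proof.
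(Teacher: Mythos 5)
Your proposal is correct and follows essentially the same route as the paper's own proof: identify $\{R_t=0\}=\mathbf M$, use Lemma~\ref{lem:fixed-disc}(\ref{it:lem4}) to collapse the arguments of $f$ on that event, apply Proposition~\ref{lem:R=0} there (with the standard extension from indicator integrands to $p\mathcal B$), and apply Proposition~\ref{prop:R>0} on $\{R_t>0\}$. Your added remarks on the monotone-class extension and on the standing hypotheses of Proposition~\ref{lem:R=0} merely make explicit what the paper leaves implicit.
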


Note that the joint law of only $(X_t,A_t)$ admits the slightly simpler formula
\begin{align*}
\E[f(X_t,A_t)] &= \gamma \int f(y,0) u(dy,t) +
 \int\limits_{\Rd\times[0,t]}U(dy,ds)f(y,t-s)\bar\Pi(t-s).
\end{align*}
Moreover, note that in the uncoupled case the jump measure $\Pi$ is supported by $\Rd\times\{0\} \cup \{0\} \times \Rp$, whence the integration variable $\xi$ in (\ref{eq:XAYR}) vanishes and $X_t$ and $Y_t$ have the same law, which agrees with lemma~\ref{lem:X=Y}.

\begin{remark}
The law of $X_t$ has appeared in \cite[th.4.1]{BeMS04Ltca} and \cite[th.3.6]{MeSc07Tal}; however, it has been overlooked that in general 
$X_t$ is not equal to $B\circ L(t) = Y_t$. 
Moreover, we do not need to impose any growth condition on $\overline \Pi(t)$, and  we have relaxed assumptions from ``$\overline \Pi(0) = \infty$ and $\gamma = 0$'' to ``$\overline \Pi(0) = \infty$ or $\gamma > 0$.'' We believe that theorem~\ref{th:munu} holds true also in the case ``$\overline \Pi(0) < \infty$ and $\gamma = 0$.'' But then $X$ and $Y$ are essentially CTRWs and not CTRW limit processes, and we do not investigate this any further.
\end{remark}

Finally, we remark that CTRWs with infinite mean waiting times are models for physical processes which exhibit very slow relaxation and ``ageing''; see \cite{Bouc92Web,BaCh03Act}, and also \cite{vCtpo} for a mathematical account. We have yet to connect the age process with the ageing phenomenon in our future work.

\bibliographystyle{elsarticle-harv}
\bibliography{diss.bib}

\end{document}